\documentclass{amsart}


\usepackage{amsmath, amssymb, array, latexsym, hyperref, color}

\usepackage{fullpage}

\newtheorem{theorem}{Theorem}[section]
\newtheorem{lemma}[theorem]{Lemma}
\newtheorem{proposition}[theorem]{Proposition}
\newtheorem{corollary}[theorem]{Corollary}

\theoremstyle{definition}

\newtheorem{example}[theorem]{Example}

\theoremstyle{remark}

\newtheorem{remarks}[theorem]{Remarks}

\numberwithin{equation}{section}


\newcommand{\prend}{$\hfill \Box$}
\newcommand{\ls}{\leqslant}
\newcommand{\gr}{\geqslant}

\newcommand{\vertiii}[1]{{\left\vert\kern-0.25ex\left\vert\kern-0.25ex\left\vert #1 
    \right\vert\kern-0.25ex\right\vert\kern-0.25ex\right\vert}}


\begin{document}

\title{On the tightness of Gaussian concentration for convex functions}

\author{Petros Valettas}
\address{Mathematics Department, University of Missouri, Columbia, MO, 65211.}
\email{valettasp@missouri.edu}
\thanks{Supported by the NSF grant DMS-1612936.}

\subjclass[2010]{Primary 60E15, 46B09, 60B99; Secondary 52A23, 52A41}
\keywords{Ehrhard's inequality, Talagrand's inequality, randomized Dvoretzky's theorem.}

\date{June 2017.}

\begin{abstract}
The concentration of measure phenomenon in Gauss' space states that
every $L$-Lipschitz map $f$ on $\mathbb R^n$ satisfies
$$ \gamma_{n} \left(\{ x : | f(x) - M_{f} | \gr t \} \right) \ls 2 e^{
  - \frac{t^2}{ 2L^2} }, \quad t>0, $$ where $\gamma_{n} $ is the
standard Gaussian measure on $\mathbb R^{n}$ and $M_{f}$ is a median
of $f$.  In this work, we provide necessary and sufficient conditions
for when this inequality can be reversed, up to universal constants, in
the case when $f$ is additionally assumed to be convex. In particular,
we show that if the variance ${\rm Var}(f)$ (with respect to
$\gamma_{n}$) satisfies $ \alpha L \ls \sqrt{ {\rm Var}(f) } $ for
some $ 0<\alpha \ls 1$, then
$$ \gamma_{n} \left(\{ x : | f(x) - M_{f} | \gr t \}\right) \gr c e^{
  -C \frac{t^2}{ L^2} } , \quad t>0 ,$$ where $c,C>0$ are constants depending only on $\alpha$.  
\end{abstract}

\maketitle



\section{Introduction}

The concentration of measure phenomenon is by now a fundamental tool
in modern probability theory with profound impacts in many research
areas. Its significance in the local theory of normed spaces was
emphasized by V. Milman in his seminal work \cite{Mil-dvo} on almost
spherical sections of high-dimensional convex bodies. Subsequently,
applications of concentration have increased remarkably in different
fields and concentration techniques have been developed in various
contexts.  The interested reader may consult the book of Ledoux
\cite{Led-book}, the comprehensive paper of Talagrand \cite{Tal-surv}
and the recent monograph \cite{BLM} by Boucheron, Lugosi and Massart,
for further background and detailed discussions on this very interesting subject.

The prototypical example of concentration is in Gauss' space $(\mathbb
R^n, \|\cdot\|_2, \gamma_n)$ stating that any $L$-Lipschitz map $f$ on
$\mathbb R^n$ satisfies
\begin{align} \label{eq:ineq-con}
\max \left \{ \gamma_n(\{x: f(x)\gr {\rm med}(f) +t\}), \gamma_n(\{x:
f(x)\ls {\rm med}(f)-t\}) \right \} \ls \frac{1}{2} e^{-t^2/2L^2},
\quad t>0.
\end{align}
Formally, the concentration inequality \eqref{eq:ineq-con} follows
from the solution to the isoperimetric problem in Gauss' space which
was proved independently by Borell \cite{Bor-iso} and Sudakov and
Tsirel'son in \cite{ST}. The latter asserts that among all Borel sets
$A$ with given measure, half-spaces have minimal Gaussian surface
area.  Equivalently, reformulating the Gaussian isoperimetry using
enlargements of sets with respect to the Euclidean ball, one has:
\begin{align} \label{eq:ineq-iso}
\gamma_n(A+tB_2^n) \gr \Phi( \Phi^{-1}[\gamma_n(A)] +t), \quad t>0,
\end{align} for all Borel sets $A \subseteq \mathbb R^n$, where $\Phi$ denotes the cumulative 
distribution function of a standard Gaussian.

The concentration inequality \eqref{eq:ineq-con} lies at the center of
many important Gaussian inequalities such as the logarithmic Sobolev
inequality \cite{Gro}, Nelson's hypercontractive principle \cite{Nel},
the Poincar\'e inequality \cite{Chen}, Ehrhard's inequality
\cite{Ehr-sym} and more.

It is known that \eqref{eq:ineq-con} is sharp for linear
functionals. In many cases it provides the correct estimate even if
the function is far from being linear, e.g. the $\ell_p^n$-norm
$z\mapsto \|z\|_p=( \sum_{i\ls n} |z_i|^p)^{1/p}$ for $1\ls p \ls 2$.
The fact that \eqref{eq:ineq-con} is sharp for norms in the large
deviation regime $t> {\rm med}(f)$ is well known, see
e.g. \cite[Proposition 2.9]{LedTal-book}. However, there are important
examples of Lipschitz maps such as $z\mapsto \max_{j\ls n} |z_j|$,
$z\mapsto \|z\|_4$ or an ellipsoidal norm $z\mapsto \|Az\|_2, \; A\in
\mathbb R^{m\times n}$, in which the classical concentration fails to
capture the right behavior in the small deviation regime $0<t<{\rm
  med}(f)$.  Frequently, the functions under consideration are
additionally convex, e.g. norms (of vectors or matrices), suprema
of linear functionals indexed by sets, among others. In view of the
above, several questions arise naturally such as:
\begin{itemize}
\item[]{\bf (Q1)} Do convexity assumptions ensure sharper
  concentration bounds?
\item[]{\bf (Q2)} Under what conditions is the classical concentration
  inequality \eqref{eq:ineq-con} tight?
\end{itemize}

For the first question, and for the deviation below the median, a
stronger, variance-sensitive, inequality is available for convex
functions, which was established recently in \cite{PVsd}: for any
convex map $f\in L_2(\gamma_n)$, one has
\begin{align} \label{eq:con-var}
\gamma_n(\{ x : f(x)\ls {\rm med}(f) - 20t \}) \ls \frac{1}{2} e^{- t^2 / {\rm Var}(f) }, \quad t>0.
\end{align}
The improvement lies in the fact that
\begin{align} \label{eq:var-lip}
{\rm Var}_{\gamma_n}(f) \ls {\rm Lip}(f)^2,
\end{align}(which follows by \eqref{eq:ineq-con}
or by the Gaussian Poincar\'e inequality \cite{Chen}). This new type
of concentration inequality \eqref{eq:con-var} exploits the convexity
properties of the Gaussian measure, as opposed to \eqref{eq:ineq-con}
which can be explained by isoperimetry. Corresponding estimates can
therefore be proved for arbitrary log-concave measures; see
\cite{PVvar}. All of these suggest that the left and right distributional
behaviors near the median should be treated separately.

In this note, we focus on the second question. To this end let us
discuss the aforementioned examples in more detail and review the
different reasons that \eqref{eq:ineq-con} and \eqref{eq:var-lip} can
fail to be tight. In particular, for the function $f(z)=\max_{i\ls n}
|z_i|$ the reason is the {\it super-concentration phenomenon},
following Chatterjee \cite{Cha}. Recall that a function $f:\mathbb
R^n\to \mathbb R$ is said to be ${\varepsilon_n}$-super-concentrated
for some $\varepsilon_n\in (0,1)$, if
\begin{align} \label{eq:def-super}
{\rm Var}_{\gamma_n}(f) \ls \varepsilon_n \mathbb E_{\gamma_n}
\|\nabla f\|_2^2.
\end{align}  With this terminology we have that $z \mapsto \max_{i\ls n}|z_i| = \|z\|_\infty$ is $\frac{C}{\log n}$-super-concentrated\footnote{Here and everywhere else $C,c,C_1,c_1,\ldots$ stand for positive universal constants whose values may change from line to line. For any two quantities $A,B$ depending on dimension, on the parameters of the problem, etc. 
We write $A\simeq B$ if there exists universal constant $C>0$ -independent of everything- such that $A\ls C B$ and $B\ls C A$.}, 
since 
\begin{align*}
{\rm Var}[\|Z\|_\infty] \simeq \frac{1}{\log n} \quad \textrm{and}
\quad \|\nabla \|Z\|_\infty \|_2 = {\rm Lip}(\|\cdot\|_\infty)=1 \quad
      {\rm a.s.},
\end{align*} where $Z$ is an $n$-dimensional standard Gaussian vector. Moreover, the deviation is known to be described 
by a two-level behavior (see e.g. \cite{Tal-new-iso} and \cite{Sch-cube})
\begin{align*}
ce^{-C\alpha_{n,\infty}(t)} \ls \mathbb P( \left| \|Z\|_\infty - \mathbb E\|Z\|_\infty \right|  > t ) \ls Ce^{-c\alpha_{n,\infty}(t) }, \quad 
\alpha_{n,\infty}(t)=\max\{t^2, t\sqrt{\log n}\}, \; t>0.
\end{align*}
In the light of \eqref{eq:def-super} we define the {\it super-concentration constant} of $f$ as follows:\footnote{Note that the 
ratio $$\frac{1}{{\bf s}(f)^2}=\frac{\mathbb E\|\nabla f\|_2^2}{ {\rm Var}(f)}= \frac{\langle -Lf,f\rangle}{\|f\|_{L_2}^2},$$ coincides
with the gaussian Rayleigh-Ritz quotient (see e.g. \cite{Ehr-iso}) of the operator $-L$ (the generator of the Ornstein-Uhlenbeck semigroup) at $f$, 
provided $\int f=0, \; f\neq 0$. 
}
\begin{align*}
{\bf s}(f) =  \sqrt{ \frac{ {\rm Var}(f)  }{\mathbb E \|\nabla f\|_2^2 }  }.
\end{align*} 
With this notation the mapping $z\mapsto \|z\|_\infty$ is super-concentrated with ${\bf s}(\|\cdot\|_\infty) \simeq 1/ \sqrt{ \log n }$.

However, the super-concentration phenomenon is not the only reason for
the sub-optimal bounds.  In the case of $z\mapsto \|z\|_4$ or
$z\mapsto \|Az\|_2, \; A\in \mathbb R^{m\times n}$, the reason is that
\begin{align*}
\mathbb E\|\nabla f(Z) \|_2^2 \ll {\rm Lip}(f)^2.
\end{align*} More precisely, we have (see e.g. \cite{Na} and \cite{PVZ})
\begin{align*}
{\rm Var}[\|Z\|_4] \simeq \frac{1}{\sqrt{n}} \simeq \mathbb E\| \nabla
\|Z\|_4 \|_2^2, \quad {\rm whereas} \quad {\rm Lip}(\|\cdot\|_4)=1,
\end{align*} and the deviation exhibits a three-level behavior:
\begin{align*}
ce^{-C\alpha_{n,4} (t) } \ls \mathbb P( \left| \|Z\|_4-\mathbb
E\|Z\|_4\right| >t )\ls Ce^{-c\alpha_{n,4}(t)}, \quad t>0,
\end{align*} where 
$\alpha_{n,4}(t) = \max \left\{ \min\{ t^2 n^{1/2}, t^{1/2}n^{3/8} \}, t^2 \right \} $.

For the ellipsoidal norm $z\mapsto Q_A(z):= \|Az\|_2, \; A\in \mathbb
R^{m\times n}$ one may check that:
\begin{align*}
{\rm Var}[Q_A(Z) ] \simeq \mathbb E\| \nabla Q_A(Z)\|_2^2 \simeq
\frac{\|A\|_{S_4}^4}{\|A\|_{\rm HS}^2}, \quad {\rm and} \quad {\rm
  Lip}(Q_A)=\|A\|_{\rm op},
\end{align*} $\|\cdot \|_{S_4}$ is the 4-Schatten norm, $\|\cdot\|_{\rm HS}$ is the Hilbert-Schmidt norm and $\|\cdot\|_{\rm op}$ 
stands for the operator norm of the linear map $A:\ell_2^n\to
\ell_2^m$. Again in this case the deviation obeys a multiple-level
behavior:
\begin{align*}
ce^{-C\alpha_A(t)} \ls \mathbb P\left( \left| \|AZ\|_2 - (\mathbb E\|AZ\|_2^2)^{1/2}
\right| > t \right) \ls C e^{-c\alpha_A(t)}, \quad t>0,
\end{align*} where 
$\alpha_A(t) = \max\left\{ \min\{ t^2\|A\|_{\rm HS}^2/ \|A\|_{S_4}^{4}
, t \|A\|_{\rm HS}/ \|A\|_{\rm op}^2 \} , t^2/ \|A\|_{\rm op}^2
\right\}$. The right-hand side estimate in the above concentration inequality
is due to Hanson and Wright \cite{HW} and is known to hold for the
more general class of sub-gaussian random vectors with independent
coordinates, see e.g. \cite{RV-hw} for a modern exposition and the
references therein.\footnote{Usually the Hanson-Wright inequality is stated for quadratic forms, hence for the map 
$z\mapsto \|Az\|_2^2$. The reason we omit the squares here is because we discuss for Lipschitz functions.}

Thus in all of the aforementioned cases, we observe that
\begin{align*}
{\rm Var}(f) \ll {\rm Lip}(f)^2.
\end{align*}
In view of the above remarks, and for the purpose of this note, we may intorduce, for any Lipschitz map $f$, the {\it over-concentration constant} 
of $f$ as follows:
\begin{align*} 
{\bf ov}(f) =\frac{ \sqrt{ {\rm Var} (f)} }{{\rm Lip}(f)}.
\end{align*} 
With this terminology the mapping $z\mapsto \|z\|_4$ is over-concentrated (but not super-concentrated) with 
${\bf ov}(\|\cdot\|_4) \simeq 1/ \sqrt[4]{n}$ and the ellipsoidal norm $z\mapsto Q_A(z)$ is over-concentrated 
when a gap occurs at the top of the spectrum of $A$. 

The main purpose of this note is to show that this parameter quantifies the tightness of the concentration
for convex Lipschitz maps. Alternatively, note that if
\eqref{eq:ineq-con} can be reversed (up to constants) then it implies
a reversal for \eqref{eq:var-lip}, and hence is a necessary condition
for the optimality of the concentration.  We show that this condition
is also sufficient. Namely we prove the following:

\begin{theorem} \label{thm:main} 
Let $0<\alpha\ls 1$ and let $f: \mathbb R^n \to \mathbb R$ be convex and $L$-Lipschitz map. Then, we have:
\begin{align} \label{eq:con-stabl}
{\rm Var}[f(Z)] \gr \alpha^2 L^2 \; \Longrightarrow \;  \mathbb P( |f(Z) -{\rm med}(f)| \gr tL) \gr c(\alpha) \exp(-C(\alpha)t^2 ), \quad  t>0,
\end{align} where $Z$ is a standard $n$-dimensional Gaussian vector and $c(\alpha), C(\alpha)>0$ depend only on $\alpha$. 
Moreover, we can have $c(\alpha) \gr c\alpha^8$ and $C(\alpha) \ls C \alpha^{-4} \log(e/\alpha)$.
\end{theorem}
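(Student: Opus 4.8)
The plan is to deduce the statement from a one‑dimensional reduction furnished by Ehrhard's inequality, and then to read off the Gaussian lower bound from the variance hypothesis by exploiting convexity.

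\emph{Step 1 (reduction to dimension one).} By homogeneity we may take $L=1$; put $M={\rm med}(f)$. Since $f$ is convex, all sublevel sets $\{f\ls s\}$ are convex, and applying Ehrhard's inequality \cite{Ehr-sym} to the inclusion $\lambda\{f\ls s\}+(1-\lambda)\{f\ls t\}\subseteq\{f\ls \lambda s+(1-\lambda)t\}$ shows that $G(u):=\Phi^{-1}\big(\gamma_n(\{f\ls u\})\big)$ is concave; Gaussian isoperimetry \eqref{eq:ineq-iso} together with the $1$‑Lipschitz property gives $G(u+r)\gr G(u)+r$ for $r>0$, so $G$ is also non-decreasing with slope $\gr 1$. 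Hence its generalized inverse $h$ (extended to a constant on its left if necessary, when $f$ is bounded below) is a convex, non-decreasing, $1$-Lipschitz function on $\mathbb R$ with $h(0)=0$, and $f(Z)$ has the same law as $M+h(N)$ for a standard Gaussian $N\sim\gamma_1$. It thus suffices to show: if $h\colon\mathbb R\to\mathbb R$ is convex, non-decreasing, $1$-Lipschitz with $h(0)=0$ and ${\rm Var}(h(N))\gr\alpha^2$, then $\mathbb P(|h(N)|\gr t)\gr c(\alpha)e^{-C(\alpha)t^2}$ for every $t>0$.

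\emph{Step 2 (a foothold from the variance).} Since $h(0)=0$ one has $\mathbb E[h(N)^2]\gr{\rm Var}(h(N))\gr\alpha^2$, while \eqref{eq:ineq-con} gives $\mathbb P(|h(N)|\gr s)\ls e^{-s^2/2}$. Writing $\mathbb E[h(N)^2]=\int_0^\infty 2s\,\mathbb P(|h(N)|\gr s)\,ds$, the contribution of $s\gr 2\sqrt{\log(2/\alpha)}$ is at most $\alpha^2/2$, so bounding the remaining integral trivially across the threshold $s=\alpha/2$ yields $\mathbb P(|h(N)|\gr\alpha/2)\gr c\,\alpha^2/\log(e/\alpha)$. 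In particular at least one of $\mathbb P(h(N)\gr\alpha/2)$ and $\mathbb P(h(N)\ls-\alpha/2)$ is $\gr p_0:=\tfrac c2\,\alpha^2/\log(e/\alpha)$.

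\emph{Step 3 (convexity upgrades the foothold).} Set $\Lambda:=\sqrt{2\log(1/2p_0)}$, so that $\Lambda\ls C\sqrt{\log(e/\alpha)}$. If $\mathbb P(h(N)\gr\alpha/2)\gr p_0$, then $a:=\inf\{x:h(x)\gr\alpha/2\}$ lies in $(0,\Lambda]$ (since $\mathbb P(N\gr a)\gr p_0$) and $h(a)\gr\alpha/2$, whence by monotonicity of $s\mapsto h(s)/s$ on $(0,\infty)$ for convex $h$ with $h(0)=0$ we get $h(s)\gr\frac{\alpha}{2\Lambda}s$ for $s\gr a$. If instead $\mathbb P(h(N)\ls-\alpha/2)\gr p_0$, then $b:=\sup\{x:h(x)\ls-\alpha/2\}\in[-\Lambda,0)$, the slope of $h$ on $[b,0]$ is $\gr\frac{\alpha}{2\Lambda}$, and convexity pushes this bound to the right, giving again $h(s)\gr\frac{\alpha}{2\Lambda}s$ for $s\gr0$. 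In either case $h^{-1}(t)\ls\frac{2\Lambda}{\alpha}t$ for $t\gr\alpha/2$, so for such $t$
\[
\mathbb P(|h(N)|\gr t)\gr\mathbb P(h(N)\gr t)=\Phi\big(-h^{-1}(t)\big)\gr\Phi\Big(-\tfrac{2\Lambda}{\alpha}t\Big)\gr c_0\exp\Big(-\tfrac{4\Lambda^2}{\alpha^2}t^2\Big),
\]
using the elementary estimate $\Phi(-u)\gr c_0e^{-u^2}$ ($u\gr0$); for $0<t<\alpha/2$ one simply uses $\mathbb P(|h(N)|\gr t)\gr\mathbb P(|h(N)|\gr\alpha/2)\gr 2p_0$. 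Since $p_0\simeq\alpha^2/\log(e/\alpha)$ and $\Lambda^2\simeq\log(e/\alpha)$, this gives \eqref{eq:con-stabl} with $C(\alpha)\simeq\alpha^{-2}\log(e/\alpha)\ls C\alpha^{-4}\log(e/\alpha)$ and $c(\alpha)\simeq\alpha^2/\log(e/\alpha)\gr c\alpha^8$.

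\emph{Expected main difficulty.} Step 3 is the crux. A priori the variance could be carried by a ``clipped'' coordinate whose law has compact support, so that $\mathbb P(|f-M|\gr t)$ vanishes for large $t$; it is precisely the convexity of $h$ — equivalently, the concavity of $G$, i.e. Ehrhard's inequality on the sublevel sets — that rules this out and forces one of the two tails of $h(N)$ to remain Gaussian. Making the heuristic ``$h$ flat near a point $\Rightarrow$ $h(N)$ concentrated there $\Rightarrow$ small variance'' quantitative is the heart of the argument; the rest is routine. The only technical point requiring care is the construction and extension of $h$, in particular the treatment of the possible atom of $f(Z)$ when $f$ attains its minimum on a set of positive Gaussian measure; that atom sits at the left end of the range and does not affect the estimates.
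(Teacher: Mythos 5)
Your proposal is correct, and it takes a genuinely different (and simpler) route than the paper's. The reduction to the one‑dimensional rearrangement $h$ via Ehrhard and Gaussian isoperimetry (your Step~1) is the same as the paper's use of the Gaussian rearrangement $f^\ast$ (Lemma~\ref{lem:basic-gauss-rearr}). The divergence is in what happens next. The paper invokes the Bobkov--Houdr\'e/Talagrand $L^2$-inequality (Theorem~\ref{Bobkov-Houdre}) to control the derivative of $f^\ast$ and obtain a Gaussian lower tail for $f^\ast(\zeta)-f^\ast(0)$ at the scale $\sqrt{\mathrm{Var}}$ (Lemma~\ref{lem:key} and Proposition~\ref{prop:rev-conc-cvx}), then passes to the final statement via centered-moment lower bounds and Paley--Zygmund (Corollary~\ref{cor:ctrd-moms}, Theorem~\ref{thm:main-2}). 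Your Step~2 replaces Talagrand's inequality by the elementary layer-cake computation: from $\mathbb E[h(N)^2]\gr\alpha^2$ and the one-sided sub-Gaussian upper bound you extract directly $\mathbb P(|h(N)|\gr\alpha/2)\gtrsim\alpha^2/\log(e/\alpha)$. Your Step~3 replaces the moment machinery by the simple observation that convexity and $h(0)=0$ turn one mass point at height $\pm\alpha/2$ within distance $\Lambda$ of the origin into the linear minorant $h(s)\gr\frac{\alpha}{2\Lambda}s$ for $s\gr a$, from which the Gaussian lower tail is immediate. This buys you three things: (i) it avoids Talagrand's theorem entirely, relying only on Ehrhard plus elementary Gaussian tail bounds; (ii) the constants come out better, $C(\alpha)\simeq\alpha^{-2}\log(e/\alpha)$ and $c(\alpha)\simeq\alpha^2/\log(e/\alpha)$, versus the paper's $\alpha^{-4}\log(e/\alpha)$ and $\alpha^8$; (iii) your "expected main difficulty'' paragraph correctly identifies the structural point. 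What the paper's route buys in exchange is the sharper intermediate result Proposition~\ref{prop:rev-conc-cvx}, phrased in terms of the super-concentration constant ${\bf s}(f)$ (which the paper then exploits in the examples and remarks); your argument is tailored to the over-concentration hypothesis $\mathrm{Var}\gr\alpha^2L^2$ and would not by itself recover that finer statement. The only places that deserve an extra sentence in a polished write-up are the two standard generalized-inverse points (that $h$ is genuinely convex, continuous and $1$-Lipschitz on all of $\mathbb R$, including across a possible flat left part, and that $\{h\gr t\}=[h^{-1}(t),\infty)$), and the elementary facts $1-\Phi(\Lambda)\ls\frac12 e^{-\Lambda^2/2}$ and $\Phi(-u)\gr c_0e^{-u^2}$; all are routine and do not affect the correctness.
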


The latter can be viewed as a stability type result in the following
sense: since \eqref{eq:var-lip} holds as equality for the affine maps,
and \eqref{eq:ineq-con} is sharp for them, we measure the
concentration for functions which are far from linear but now the
``distance'' is measured in terms of the ${\bf ov}(f)$.

\smallskip

In what follows we fix the notation. We use $\zeta$ for a standard Gaussian random variable, i.e. $\zeta\sim N(0,1)$ and $Z$ for a standard Gaussian
(usually $n$-dimensional) random vector, i.e. $Z\sim N({\bf 0}, I_n)$. We write $\gamma_n$ for the $n$-dimensional standard Gaussian measure and 
simply $\gamma$ for $\gamma_1$. The symbol $\mathbb E$ or $\mathbb E_{\gamma_n}$ stands for the expectation 
and ${\rm Var}$ or ${\rm Var}_{\gamma_n}$ stands for the variance. Let $\Phi(x)= \frac{1}{\sqrt{2\pi}} \int_{-\infty}^x e^{-t^2/2}\, dt$ the 
cumulative distribution function of a standard normal. 
We write ${\rm med}(\xi)$ for a median of a random variable $\xi$.

\section{Proof of the main result}

In this Section we discuss the basic tools in order to establish \eqref{eq:con-stabl} and we finally give the 
proof in Theorem \ref{thm:main-2}. The first ingredient in our approach is Ehrhard's inequality \cite{Ehr-sym}. 
Ehrhard proved his inequality in the following form: 

\begin{theorem} [Ehrhard] \label{thm:Ehr-ineq}
Let $A,B$ be convex sets in $\mathbb R^n$. Then, for any $\lambda \in (0,1)$ we have:
\begin{align*}
\Phi^{-1} [\gamma_n( (1-\lambda )A+\lambda B) ] \gr (1-\lambda )\Phi^{-1} [\gamma_n(A)] +\lambda \Phi^{-1}[\gamma_n(B)].
\end{align*}
\end{theorem}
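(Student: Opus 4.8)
The statement is Ehrhard's inequality, and I would reproduce Ehrhard's original strategy: prove it first in dimension one by a direct computation, and then transport it to $\mathbb R^n$ by a Gaussian analogue of Steiner symmetrization. So the plan is in three steps.

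\emph{Step 1: the one-dimensional case.} In $\mathbb R$ a convex set is an interval, and since $(1-\lambda)[a_1,b_1]+\lambda[a_2,b_2]=[(1-\lambda)a_1+\lambda a_2,\,(1-\lambda)b_1+\lambda b_2]$ and $\gamma_1([a,b])=\Phi(b)-\Phi(a)$, Ehrhard's inequality for intervals is exactly the statement that
\[ F(a,b):=\Phi^{-1}\big(\Phi(b)-\Phi(a)\big),\qquad a\ls b, \]
is concave on $\{(a,b):a\ls b\}$ (with the conventions $F\equiv-\infty$ on the diagonal and $F(-\infty,b)=b$, $F(a,+\infty)=-a$, which are affine and so cause no trouble). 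I would prove concavity by computing the Hessian of $F$: setting $\varphi:=\Phi'$ and using $\varphi'(x)=-x\varphi(x)$ together with $(\Phi^{-1})'(u)=1/\varphi(\Phi^{-1}(u))$, one gets after simplification that the Hessian is negative semidefinite throughout the domain; degenerate or unbounded intervals are then handled by continuity and monotonicity. I expect this Hessian computation, though elementary, to be the most technical point.

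\emph{Step 2: Gaussian symmetrization.} Fix a unit vector $\theta$ and split $\mathbb R^n=\theta^\perp\oplus\mathbb R\theta$. For a convex set $K$ and $y\in\theta^\perp$ let $K_y=\{s\in\mathbb R:y+s\theta\in K\}$ (an interval) and, on $\{y:K_y\neq\emptyset\}$, set $h_K(y)=\Phi^{-1}(\gamma_1(K_y))$; define the symmetral $K^\theta=\{y+s\theta:s\ls h_K(y)\}$. By Fubini, $\gamma_n(K^\theta)=\gamma_n(K)$. If $K$ is convex then $h_K$ is concave: for $y=(1-\lambda)y_1+\lambda y_2$, convexity gives $K_y\supseteq(1-\lambda)K_{y_1}+\lambda K_{y_2}$, and Step 1 turns this into $h_K(y)\gr(1-\lambda)h_K(y_1)+\lambda h_K(y_2)$, so $K^\theta$ is convex. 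Applying Step 1 to the sections of $A$, $B$ and $C:=(1-\lambda)A+\lambda B$ (using $C_y\supseteq(1-\lambda)A_{y_1}+\lambda B_{y_2}$ whenever $y=(1-\lambda)y_1+\lambda y_2$) shows $(1-\lambda)A^\theta+\lambda B^\theta\subseteq C^\theta$, hence
\[ \gamma_n\big((1-\lambda)A+\lambda B\big)=\gamma_n(C^\theta)\gr\gamma_n\big((1-\lambda)A^\theta+\lambda B^\theta\big), \]
while $\gamma_n(A^\theta)=\gamma_n(A)$ and $\gamma_n(B^\theta)=\gamma_n(B)$. Thus it suffices to prove Ehrhard's inequality for the symmetrals $A^\theta,B^\theta$ in place of $A,B$.

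\emph{Step 3: iteration to half-spaces.} If $\gamma_n(A)$ or $\gamma_n(B)$ is $0$ the inequality is vacuous, so assume both are positive; by inner approximation (and continuity of $\gamma_n$ and of Minkowski sums under increasing unions) we may also assume $A,B$ are bounded convex bodies. Iterating Step 2 along a sequence of directions dense in the sphere, standard symmetrization arguments give convex sets converging, in the local Hausdorff sense, to half-spaces $H_A,H_B$ with $\gamma_n(H_A)=\gamma_n(A)$ and $\gamma_n(H_B)=\gamma_n(B)$; since $\gamma_n$ is continuous along such sequences (boundaries of convex sets are $\gamma_n$-null), the quantity $\gamma_n((1-\lambda)(\cdot)+\lambda(\cdot))$ passes to the limit. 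For half-spaces the inequality is immediate: parallel half-spaces $H_A=\{\langle x,u\rangle\ls c_1\}$, $H_B=\{\langle x,u\rangle\ls c_2\}$ satisfy $(1-\lambda)H_A+\lambda H_B=\{\langle x,u\rangle\ls(1-\lambda)c_1+\lambda c_2\}$, so both sides of Ehrhard's inequality equal $(1-\lambda)c_1+\lambda c_2$; if the half-spaces are not parallel the Minkowski sum is all of $\mathbb R^n$ and the left side is $+\infty$. Combining the three steps proves the theorem. Besides the Hessian computation in Step 1, the delicate part is Step 3: verifying that iterated Gaussian symmetrals of a convex body really do converge to half-spaces, and that all the measures and Minkowski sums behave continuously in the limit.
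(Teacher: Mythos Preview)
The paper does not prove this theorem; it is stated as Ehrhard's result and cited to \cite{Ehr-sym}, with a remark that Borell later extended it to all Borel sets. There is therefore no proof in the paper to compare against.

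That said, your outline is faithful to Ehrhard's original argument: the one-dimensional concavity of $(a,b)\mapsto\Phi^{-1}(\Phi(b)-\Phi(a))$, Gaussian (Ehrhard) symmetrization in a direction, and iteration to half-spaces. The places you flag as delicate are the right ones. In Step~1 the Hessian computation is routine but must be carried out carefully at the boundary $a=b$ and for unbounded intervals. In Step~3 the genuine work is the convergence statement: iterated Gaussian symmetrizations of a convex body converge to a half-space of the same measure, and one must check that the functional $(A,B)\mapsto\gamma_n((1-\lambda)A+\lambda B)$ is lower semicontinuous along the approximating sequence. This is handled in Ehrhard's paper via a compactness/monotonicity argument for the symmetrization operator, but it is not a one-line fact; if you intend a self-contained proof you should either supply that argument or cite it precisely.
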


The above inequality has been extended to all Borel sets by Borell in \cite{Bor-Ehr}. (See \cite{IV, vHan, NP} for recent developments 
and further references). However we will not need these extensions in this work.  

An important tool in Ehrhard's work is the notion of the {\it Gaussian rearrangement} that we recall now.
Let $f:\mathbb R^n \to \mathbb R$ be a measurable function. Following Ehrhard \cite{Ehr-iso} we define the 
Gaussian rearrangement of $f$ as the generalized inverse of the map $t\mapsto \Phi^{-1}\circ \gamma_n(f\ls t)$, i.e.
\begin{align*}
f^\ast(s) = \inf\{ t: s \ls \Phi^{-1} \circ \gamma_n(f\ls t) \}.
\end{align*} 

Note that $f^\ast$ is non-decreasing and transports the measure $\gamma$ to the distribution of $f(Z)$ with $Z\sim N({\bf 0},I_n)$. 
In the following lemma we collect some basic properties of $f^{\ast}$ that we will need in the sequel. We sketch the proof of some basic facts for 
reader's convenience.   

\begin{lemma}[Ehrhard] \label{lem:basic-gauss-rearr}
Let $f$ be a measurable function on $\mathbb R^n$. Then, $f^\ast$ enjoys the following properties:
\begin{itemize}
\item [\rm a.] The map $f^\ast :\mathbb R\to \mathbb R$ is convex, if $f$ is convex.
\item [\rm b.] The modulus of continuity of $f^\ast$ satisfies $\omega_{f^{\ast}}\ls \omega_f$. In particular, if $f$ is Lipschitz, then 
$f^\ast$ is Lipschitz with $\|f^\ast\|_{\rm Lip}\ls \|f\|_{\rm Lip}$.
\item [\rm c.] For all $u\in \mathbb R$ we have $\gamma(f^\ast \ls u) = \gamma_n(f\ls u)$.
\item [\rm d.] For any $1\ls p <\infty$ one has:
\begin{align*}
\int_{\mathbb R} |(f^\ast)' |^p \, d\gamma \ls \int_{\mathbb R^n} \|\nabla f\|_2^p \, d\gamma_n. 
\end{align*}
\end{itemize}
\end{lemma}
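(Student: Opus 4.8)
The plan is to derive all four properties from the two tools already on the table: the Gaussian isoperimetric inequality in the enlargement form \eqref{eq:ineq-iso} and Ehrhard's inequality (Theorem \ref{thm:Ehr-ineq}). Write $\mu_f(u) := \gamma_n(f \ls u)$ and $G(u) := \Phi^{-1}(\mu_f(u))$; then $G$ is non-decreasing and right-continuous, $f^\ast$ is by construction its generalized inverse, $f^\ast(s) = \inf\{u : G(u) \gr s\}$, and elementary facts about generalized inverses give $G(f^\ast(s)) \gr s$ for every $s$ in the essential range together with
\[
\{ s : f^\ast(s) \ls u \} = \{ s : s \ls G(u) \} = (-\infty, G(u)] .
\]
In particular $f^\ast$ is non-decreasing, and taking $\gamma$-measure of both sides yields $\gamma(f^\ast \ls u) = \Phi(G(u)) = \mu_f(u)$, which is property (c).

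For (b), I would fix a value $u$ in the range of $f$ and $t > 0$. By the definition of $\omega_f$ one has the set inclusion $\{ f \ls u \} + t B_2^n \subseteq \{ f \ls u + \omega_f(t) \}$, so \eqref{eq:ineq-iso} gives
\[
\Phi\big( G(u + \omega_f(t)) \big) = \gamma_n\big( f \ls u + \omega_f(t) \big) \gr \gamma_n\big( \{f \ls u\} + t B_2^n \big) \gr \Phi\big( G(u) + t \big),
\]
that is, $G(u + \omega_f(t)) \gr G(u) + t$. Applying this with $u = f^\ast(s)$ and using $G(f^\ast(s)) \gr s$ we get $G\big( f^\ast(s) + \omega_f(t) \big) \gr s + t$, hence $f^\ast(s+t) \ls f^\ast(s) + \omega_f(t)$ by the definition of $f^\ast$. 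Since $f^\ast$ and $\omega_f$ are non-decreasing this is exactly $\omega_{f^\ast} \ls \omega_f$; the Lipschitz assertion is the special case $\omega_f(t) = \|f\|_{\rm Lip}\, t$.

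For (a), I would fix $s_0 < s_1$ and $\lambda \in (0,1)$, put $u_i := f^\ast(s_i)$, and let $A := \{ f \ls u_0 \}$, $B := \{ f \ls u_1 \}$, which are convex since $f$ is. Convexity of $f$ also gives $(1-\lambda)A + \lambda B \subseteq \{ f \ls (1-\lambda)u_0 + \lambda u_1 \}$, so by monotonicity of $\Phi^{-1}$ and Ehrhard's inequality,
\[
G\big( (1-\lambda)u_0 + \lambda u_1 \big) \gr \Phi^{-1}\big[ \gamma_n( (1-\lambda)A + \lambda B) \big] \gr (1-\lambda) G(u_0) + \lambda G(u_1) \gr (1-\lambda)s_0 + \lambda s_1 ,
\]
and the definition of $f^\ast$ converts this into $f^\ast\big( (1-\lambda)s_0 + \lambda s_1 \big) \ls (1-\lambda) f^\ast(s_0) + \lambda f^\ast(s_1)$, i.e. $f^\ast$ is convex. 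The degenerate cases $\mu_f \in \{0,1\}$ are absorbed by the conventions $\Phi^{-1}(0) = -\infty$, $\Phi^{-1}(1) = +\infty$ and cause no difficulty.

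Property (d) is the only genuinely technical point, and here I would follow Ehrhard \cite{Ehr-iso}. Since $G$ is the generalized inverse of $f^\ast$, at a.e.\ $s$ one has $(f^\ast)'(s) = \mathcal I(\mu_f(f^\ast(s)))/\mu_f'(f^\ast(s))$, where $\mathcal I := \Phi' \circ \Phi^{-1}$ is the Gaussian isoperimetric profile; the change of variables $u = f^\ast(s)$ then gives $\int_{\mathbb R} |(f^\ast)'|^p\, d\gamma = \int_{\mathbb R} \mathcal I(\mu_f(u))^p\, \mu_f'(u)^{-(p-1)}\, du$. Next, for a.e.\ level $u$ the coarea formula expresses the Gaussian perimeter of $\{ f \ls u \}$ as $\int_{\{f = u\}} d\sigma_\gamma$ and $\mu_f'(u)$ as $\int_{\{f = u\}} \|\nabla f\|_2^{-1}\, d\sigma_\gamma$ ($\sigma_\gamma$ being the Gaussian-weighted surface measure on the level set), while the isoperimetric inequality $\mathcal I(\gamma_n(A)) \ls P_\gamma(A)$ (Gaussian perimeter of $A$) gives $\mathcal I(\mu_f(u)) \ls \int_{\{f=u\}} d\sigma_\gamma$. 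A single application of Hölder's inequality on $\{ f = u \}$ with exponents $p$ and $p/(p-1)$ yields
\[
\Big( \int_{\{f=u\}} d\sigma_\gamma \Big)^p \ls \Big( \int_{\{f=u\}} \|\nabla f\|_2^{p-1}\, d\sigma_\gamma \Big) \Big( \int_{\{f=u\}} \|\nabla f\|_2^{-1}\, d\sigma_\gamma \Big)^{p-1} ,
\]
so that $\mathcal I(\mu_f(u))^p\, \mu_f'(u)^{-(p-1)} \ls \int_{\{f=u\}} \|\nabla f\|_2^{p-1}\, d\sigma_\gamma$; integrating over $u$ and invoking the coarea formula once more, in the form $\int_{\mathbb R^n} \|\nabla f\|_2^p\, d\gamma_n = \int_{\mathbb R} \big( \int_{\{f=u\}} \|\nabla f\|_2^{p-1}\, d\sigma_\gamma \big)\, du$, closes the estimate. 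The main obstacle here is purely one of rigor — justifying the coarea identities and the a.e.-differentiability of $f^\ast$ and $\mu_f$ for a merely Lipschitz (or Sobolev) $f$, e.g.\ by smoothing $f$ and passing to the limit — whereas parts (a)--(c) are formal consequences of \eqref{eq:ineq-iso} and Theorem \ref{thm:Ehr-ineq}.
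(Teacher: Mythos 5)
Your proof of parts (a)--(c) is exactly the paper's: (a) from Ehrhard's inequality applied to the convex sublevel sets of $f$, giving concavity of $\Phi^{-1}\circ\gamma_n(f\ls\cdot)$ and hence convexity of its generalized inverse $f^\ast$; (b) from the isoperimetric enlargement inequality \eqref{eq:ineq-iso} applied to $\{f\ls u\}$; and (c) from the push-forward identity for the generalized inverse. For (d) the paper simply cites \cite{Ehr-iso}; your sketch --- coarea, the isoperimetric lower bound on the Gaussian perimeter of the sublevel set, and H\"older with exponents $p$ and $p/(p-1)$ on each level set --- reproduces Ehrhard's own argument and is correct modulo the regularity issues you yourself flag (a.e.\ differentiability of $\mu_f$ and $f^\ast$, validity of the coarea identities for Lipschitz $f$, handled by smoothing and passing to the limit).
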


\noindent {\it Proof.} (a). Applying Ehrhard's inequality (Theorem \ref{thm:Ehr-ineq}) for $A=B=\{f\ls t\}$ we obtain 
that ($f^\ast)^{-1}$ is concave. 

\smallskip

\noindent (b). The isoperimetric inequality \eqref{eq:ineq-iso} implies:
\begin{align*}
\Phi^{-1}\circ \gamma_n(A+sB_2^n) \gr \Phi^{-1}\circ \gamma_n(A) +s, \quad s>0.
\end{align*} For $A=\{f\ls t\}$ says that $(f^\ast)^{-1}(t +\omega_f(s)) \gr (f^\ast)^{-1}(t)+s$, which yields:
\begin{align*}
f^\ast\left( (f^\ast)^{-1}(t)+s\right) - t \ls \omega_f(s),
\end{align*} for all $t\in \mathbb R$ and $s>0$. 

\smallskip

\noindent (c). We may write:
\begin{align*}
\gamma(\{ t\in \mathbb R : f^\ast(t) \ls u \}) = \gamma (\{ t: F(u) \gr t\})= \Phi(F(u)) =\gamma_n(\{ x\in \mathbb R^n : f(x) \ls u\}),
\end{align*} for all $u\in \mathbb R$. 

\smallskip

\noindent (d). For a proof the reader is referred to \cite{Ehr-iso} (see also \cite{Ehr-log-sob} for a related application). \prend

\medskip

The second ingredient in our approach is a remarkable inequality of Talagrand \cite{Tal-russo} that improves upon 
the classical Poincar\'e inequality \cite{Chen} (see also \cite{Cher}). Before stating his result we need to recall some basic definitions. 
A function $\psi: [0,\infty) \to \mathbb R$ is said to be a Young function if it is convex, increasing and $ \psi(0)=0$. 
If $(\Omega,\mathcal A, \mu)$ is a probability space and $\psi$ is a Young function, 
the Orlicz norm of an $\mathcal A$-measurable function $h$ in $L^{\psi}(\mu)$ is defined by 
$$ \| h\|_{\psi} := \inf \left \{ \lambda >0: \mathbb E_\mu \psi \left( \frac{ |h|}{ \lambda}\right) \ls 1 \right \} . $$ 

With this notation we have the following:

\begin{theorem} [Talagrand]
Let $\varphi(t) := \frac{ t^{2} }{ \log{ (e+t)} }, \;  t\gr 0$. For any smooth function $f$ we have:
\begin{align*}
{\rm Var}( f) \ls C \sum_{i=1}^n \|\partial_i f\|_{\varphi}^2.
\end{align*}
\end{theorem}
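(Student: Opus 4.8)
The plan is to follow the classical semigroup proof of ``$L^{1}$-versus-$L^{2}$'' Poincar\'e improvements. I would first rewrite the variance of $f$ as a time integral of gradient--semigroup pairings, then contract those pairings to $L^{1+s}$-norms of the partial derivatives using Nelson's hypercontractive inequality, and thereby reduce the theorem to a single one-dimensional estimate, namely $\int_{0}^{1}\|g\|_{L^{1+s}(\gamma_n)}^{2}\,ds\ls C\|g\|_{\varphi}^{2}$, into which the precise shape of $\varphi(t)=t^{2}/\log(e+t)$ enters.

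Concretely, assume $\mathbb E_{\gamma_n}f=0$ and let $(P_{t})_{t\gr 0}$ denote the Ornstein--Uhlenbeck semigroup. Integrating the identity $\frac{d}{dt}\mathbb E[(P_{t}f)^{2}]=-2\,\mathbb E\|\nabla P_{t}f\|_{2}^{2}$ in $t$, using the commutation relation $\partial_{i}P_{t}=e^{-t}P_{t}\partial_{i}$ together with $\|P_{t}h\|_{2}^{2}=\langle h,P_{2t}h\rangle$, and substituting $\tau=2t$ yields
$$ {\rm Var}(f)=\sum_{i=1}^{n}\int_{0}^{\infty}e^{-\tau}\,\langle \partial_{i}f,\,P_{\tau}\partial_{i}f\rangle\,d\tau $$
(the bound $P_{\tau}\ls I$ alone recovers the classical Poincar\'e inequality ${\rm Var}(f)\ls\mathbb E\|\nabla f\|_{2}^{2}$). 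Fixing $i$ and writing $g=\partial_{i}f$, self-adjointness and $P_{\tau}=P_{\tau/2}^{2}$ give $\langle g,P_{\tau}g\rangle=\|P_{\tau/2}g\|_{2}^{2}$, and Nelson's theorem gives $\|P_{\tau/2}g\|_{2}\ls\|g\|_{1+e^{-\tau}}$ because $P_{\tau/2}\colon L^{1+e^{-\tau}}(\gamma_n)\to L^{2}(\gamma_n)$ is a contraction. After the substitution $s=e^{-\tau}$ the theorem thus reduces to
$$ \int_{0}^{1}\|g\|_{L^{1+s}(\gamma_n)}^{2}\,ds\;\ls\;C\,\|g\|_{\varphi}^{2} $$
for an arbitrary single function $g$.

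To prove this last inequality I would normalise $\|g\|_{\varphi}=1$, so that $\mathbb E\,\varphi(|g|)\ls 1$, and split $|g|$ into dyadic bands $A_{k}=\{2^{k}\ls|g|<2^{k+1}\}$, $p_{k}=\gamma_n(A_{k})$. Monotonicity of $\varphi$ gives $\sum_{k}\varphi(2^{k})p_{k}\ls 1$, and since $\varphi(2^{k})\simeq 2^{2k}/\max(k,1)$ this yields $\sum_{k\gr 1}2^{2k}p_{k}/k\ls C$ and $\sum_{k\ls 0}2^{2k}p_{k}\ls C$. From $\mathbb E|g|^{1+s}\ls 4\sum_{k}2^{k(1+s)}p_{k}$ and $\int_{0}^{1}2^{k(1+s)}\,ds\ls 2^{2k}/k$ for $k\gr 1$ (with $2^{k(1+s)}\ls 1$ for $k\ls 0$), one gets $\int_{0}^{1}\mathbb E|g|^{1+s}\,ds\ls C$. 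Then for $s\ls\tfrac12$ one bounds $\|g\|_{1+s}^{2}\ls\|g\|_{3/2}^{2}\ls C$ directly from the same dyadic sums, while for $s\in[\tfrac12,1)$ one writes $\|g\|_{1+s}^{2}=\mathbb E|g|^{1+s}\cdot\big(\mathbb E|g|^{1+s}\big)^{(1-s)/(1+s)}$, uses the dyadic sums once more to get $\mathbb E|g|^{1+s}\ls C/(1-s)$, and observes that $\big(C/(1-s)\big)^{(1-s)/(1+s)}$ stays bounded on $[\tfrac12,1)$ since $(1-s)\log\frac{1}{1-s}\to 0$. Adding the two ranges gives $\int_{0}^{1}\|g\|_{1+s}^{2}\,ds\ls C$, which is what is needed.

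The main obstacle is precisely the regime $s\to 1$ of the last step: there $\tfrac{2}{1+s}\to 1$, so $\|g\|_{1+s}\to\|g\|_{2}$, yet $\|g\|_{2}$ is \emph{not} controlled by $\|g\|_{\varphi}$ (one can have $\|g\|_{\varphi}<\infty$ while $\|g\|_{2}=\infty$); hence no pointwise domination of the integrand can work and the logarithmic gain must come from the integration in $s$. The point one has to exploit is that the single constraint $\mathbb E\,\varphi(|g|)\ls 1$ forbids $|g|$ from being maximally heavy at all dyadic scales simultaneously --- which is exactly why the relevant quantities are the weighted sums $\sum 2^{2k}p_{k}/k$ rather than $\sum 2^{2k}p_{k}$ --- equivalently that $\|g\|_{1+s}$ stays well below $\|g\|_{2}$ outside a short interval near $1$ whose length shrinks with the tail of $g$. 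Steps 1--2 are routine; the careful bookkeeping of the two dyadic ranges and of the $s\to 1$ regime, and the extraction of the universal constant $C$ of the statement, is where the actual work lies.
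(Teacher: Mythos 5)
The paper does not actually prove this statement; it only cites Talagrand's result for the discrete cube and remarks that the Gaussian version follows via the central limit theorem or by carrying the cube argument over to Gauss space, pointing to [CL] for alternatives. Your proposal is exactly that transported argument in its clean semigroup form, and it is correct. The variance representation ${\rm Var}(f)=\sum_{i}\int_{0}^{\infty}e^{-\tau}\langle\partial_{i}f,P_{\tau}\partial_{i}f\rangle\,d\tau$, the hypercontractive contraction $\|P_{\tau/2}g\|_{2}\ls\|g\|_{1+e^{-\tau}}$ (valid since $e^{\tau}(p-1)\gr 1$ precisely when $p\gr 1+e^{-\tau}$), and the resulting reduction to the one-dimensional estimate $\int_{0}^{1}\|g\|_{1+s}^{2}\,ds\ls C\|g\|_{\varphi}^{2}$ are all sound. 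Your dyadic verification of that estimate also checks out: normalizing so that $\mathbb E\,\varphi(|g|)\ls 1$ gives $\sum_{k\gr 1}2^{2k}p_{k}/k\ls C$ together with $\sum_{k}p_{k}\ls 1$; since $u\mapsto u/2^{u}$ is bounded on $(0,\infty)$, taking $u=k(1-s)$ yields $\mathbb E|g|^{1+s}\ls C/(1-s)$; the factor $\big(C/(1-s)\big)^{(1-s)/(1+s)}$ does stay bounded as $s\to 1$ because $(1-s)\log\frac{1}{1-s}\to 0$; and $\int_{0}^{1}\mathbb E|g|^{1+s}\,ds\ls C$ closes the argument. One small remark: the rate $C/(1-s)$ is generous --- the constraint $\mathbb E\,\varphi(|g|)\ls 1$ in fact only allows $\mathbb E|g|^{1+s}=O\big(\log\frac{1}{1-s}\big)$ --- but that plays no role, since any subexponential rate in $1/(1-s)$ would do once it is raised to the power $(1-s)/(1+s)$. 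You correctly identified the $s\to 1$ regime as the crux, and the argument handles it properly.
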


Talagrand proved and stated his theorem in the case of the discrete cube (with the normalized counting measure) instead of the Gaussian. 
The above statement follows by a standard application of the central limit theorem or by mimicking his proof for the cube in Gauss' space 
(see also \cite{CL} for alternative versions). 

On the real line Bobkov and Houdr\'e in \cite{BH} proved that for convex functions the reverse inequality is also true.

\begin{theorem} [Bobkov-Houdr\'e]
\label{Bobkov-Houdre} There exist universal constants $c_{1}, c_{2}, c_{3}>0$ such that 
for every convex function $f:\mathbb R\rightarrow \mathbb R$ one has
\begin{align*}
c_{1} \int_{\mathbb R} \frac{ | f^{\prime}(t)|^{2} }{ 1+ t^{2}} d\gamma(t)  \ls {\rm Var}_\gamma(f) \ls   c_{2}   \| f^{\prime}\|_{\varphi}^{2} \ls c_{3} \int_{\mathbb R} \frac{ | f^{\prime}(t)|^{2} }{ 1+ t^{2}} d\gamma(t)  .
\end{align*}
\end{theorem}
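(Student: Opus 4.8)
The plan is to establish the three inequalities of the chain separately. The middle one, ${\rm Var}_\gamma(f)\ls c_2\|f'\|_\varphi^2$, is Talagrand's inequality stated above with $n=1$, which needs no convexity, so I would simply invoke it. For the two outer inequalities only the monotonicity of $g:=f'$ is used, and it is convenient to reduce first to the case $f$ non-decreasing (so $g\gr0$): if $t_0$ is a point where $f'$ changes sign, then $f$ is monotone on each of $(-\infty,t_0]$ and $[t_0,\infty)$, so $\int_{\mathbb R}\frac{|f'|^2}{1+t^2}\,d\gamma$ splits at $t_0$ while ${\rm Var}_\gamma(f)$ is bounded below by the two one-sided conditional variances (law of total variance for the partition $\{\zeta<t_0\}$, $\{\zeta\gr t_0\}$); a reflection then brings each half to the non-decreasing case. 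Write $\rho=\Phi'$ for the Gaussian density.

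\smallskip

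For the left inequality $c_1\int\frac{|f'|^2}{1+t^2}\,d\gamma\ls{\rm Var}_\gamma(f)$ --- a converse weighted Poincar\'e inequality, and the step I expect to be the main obstacle, since no such reversal is true without convexity --- I would use the integration-by-parts identity
\begin{align*}
{\rm Var}_\gamma(f)=\int_{\mathbb R}f'(x)\Big(\int_x^\infty\big(f(t)-\mathbb E_\gamma f\big)\,d\gamma(t)\Big)\,dx ,
\end{align*}
both factors being non-negative since $f$ is non-decreasing. Let $x^\ast$ satisfy $f(x^\ast)=\mathbb E_\gamma f$; then $x^\ast\gr0$ by Jensen's inequality ($\mathbb E_\gamma f\gr f(0)$). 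For $x\gr x^\ast$ the tangent-line estimate $f(t)-f(x)\gr f'(x)(t-x)$ for $t\gr x$, together with $f(x)\gr\mathbb E_\gamma f$, gives $\int_x^\infty(f-\mathbb E_\gamma f)\,d\gamma\gr f'(x)\big(\rho(x)-x(1-\Phi(x))\big)\simeq f'(x)\rho(x)/(1+x^2)$, so the part of the identity over $[x^\ast,\infty)$ already dominates $c\int_{x^\ast}^\infty\frac{|f'|^2}{1+x^2}\,d\gamma$. For $x<0$ one uses instead $\int_x^\infty(f-\mathbb E_\gamma f)\,d\gamma=\int_{-\infty}^x(\mathbb E_\gamma f-f(t))\,d\gamma(t)\gr(\mathbb E_\gamma f-f(x))\Phi(x)$ with $\mathbb E_\gamma f-f(x)=\int_x^{x^\ast}f'\gr f'(x)|x|$ and the Mills-ratio bound $\Phi(x)\gr\frac{|x|}{1+x^2}\rho(x)$, which together yield the matching estimate on $(-\infty,-1]$. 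The bounded range $x\in[-1,x^\ast]$ is where care is needed when $x^\ast$ is large: there $(\mathbb E_\gamma f-f(x))\Phi(x)\gr f'(x)(x^\ast-x)\Phi(-1)$ does the job except on a window $[x^\ast-\delta,x^\ast]$ of width $\delta\simeq\rho(x^\ast)/(1+(x^\ast)^2)$, on which one re-uses the $[x^\ast,\infty)$-type bound evaluated at $x^\ast$ (the negative piece $\int_x^{x^\ast}(f-\mathbb E_\gamma f)\,d\gamma$ being of smaller order precisely because $\delta$ is this small). Assembling the three ranges gives the inequality.

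\smallskip

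For the right inequality $\|f'\|_\varphi^2\ls c_3\int\frac{|f'|^2}{1+t^2}\,d\gamma$ I would pass to the distribution function $p_r:=\gamma(\{|g|>r\})$. Since $g$ is monotone its superlevel sets are half-lines, so the layer-cake formula together with the Gaussian-tail estimate $\int_a^\infty\frac{d\gamma}{1+t^2}\simeq\frac{\gamma((a,\infty))}{1+\log(1/\gamma((a,\infty)))}$ gives $A:=\int\frac{|g|^2}{1+t^2}\,d\gamma\simeq\int_0^\infty\frac{r\,p_r}{1+\log(1/p_r)}\,dr$, while for \emph{every} $g$ a similar layer-cake computation and $\varphi'(u)\simeq u/\log(e+u)$ give $\int\varphi(g/\lambda)\,d\gamma\simeq\lambda^{-2}\int_0^\infty\frac{r\,p_r}{\log(e+r/\lambda)}\,dr$; hence $\|g\|_\varphi^2\simeq\int_0^\infty\frac{r\,p_r}{\log(e+r/\lambda)}\,dr$ at $\lambda=\|g\|_\varphi$. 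Now Chebyshev's inequality in $L^\varphi$ gives $p_r\ls1/\varphi(r/\lambda)$, whence $1+\log(1/p_r)\gr c\log(e+r/\lambda)$ once $r\gr2\lambda$, so $\int_{2\lambda}^\infty\frac{r p_r}{\log(e+r/\lambda)}\,dr\ls C\int_0^\infty\frac{r p_r}{1+\log(1/p_r)}\,dr\simeq CA$; and $\int_0^{2\lambda}\frac{r p_r}{\log(e+r/\lambda)}\,dr\ls\int_0^{2\lambda}r p_r\,dr$, which a short case analysis on the size of $p_0$ bounds by $CA+\tfrac{1}{10}\lambda^2$ (when $p_0$ is not small, $1+\log(1/p_r)=O(1)$ on the part of $[0,2\lambda]$ where $p_r$ stays above a fixed constant, and the complementary part contributes only a small multiple of $\lambda^2$). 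Adding the two ranges, $\int_0^\infty\frac{r p_r}{\log(e+r/\lambda)}\,dr\ls C'A+\tfrac{1}{10}\lambda^2$; since the left side is $\simeq\lambda^2$, this forces $\lambda^2\ls C''A$, i.e.\ $\|f'\|_\varphi^2\ls C''\int\frac{|f'|^2}{1+t^2}\,d\gamma$. The relation $\varphi^{-1}(s)\simeq\sqrt{s\log(e+s)}$ is what makes this tail bookkeeping go through. Together with Talagrand's inequality, the three estimates also give ${\rm Var}_\gamma(f)\simeq\|f'\|_\varphi^2\simeq\int\frac{|f'|^2}{1+t^2}\,d\gamma$.
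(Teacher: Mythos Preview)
The paper does not actually prove this theorem: it is quoted as a result of Bobkov and Houdr\'e, with the remark that only the rightmost inequality is needed and that it ``is a consequence of Talagrand's theorem (see \cite[Lemma 5]{BH}).'' So there is no in-paper argument to compare your proposal against; you are supplying a proof where the paper simply cites one.

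As for the proposal itself, the overall architecture is sound and is essentially the Bobkov--Houdr\'e strategy: the middle inequality is Talagrand for $n=1$; the two outer inequalities exploit only that $g=f'$ is monotone, via the covariance/Hardy-type representation ${\rm Var}_\gamma(f)=\int f'(x)\,G(x)\,dx$ with $G(x)=\int_x^\infty (f-\mathbb E f)\,d\gamma$, and via layer-cake together with the tail estimate $\int_a^\infty \frac{d\gamma}{1+t^2}\simeq \frac{1-\Phi(a)}{1+\log\frac{1}{1-\Phi(a)}}$. Two places deserve tightening. First, your reduction to the non-decreasing case via the law of total variance is not quite right as stated: conditioning on $\{\zeta<t_0\}$ or $\{\zeta\gr t_0\}$ produces truncated Gaussians, not Gaussians, so ``a reflection then brings each half to the non-decreasing case'' does not literally apply; it is cleaner either to run the covariance argument directly for general convex $f$ (splitting $G$ at the minimizer $t_0$ and using that $|f'|$ has half-line superlevel sets on each side), or to observe that all the pointwise bounds you use for $G$ and for the Mills ratio are insensitive to the sign of $f'$. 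Second, the ``window'' analysis on $[x^\ast-\delta,x^\ast]$ in the left inequality and the ``short case analysis on the size of $p_0$'' in the right inequality are only sketched; both can be completed, but as written they are assertions rather than arguments. None of this is fatal---the approach is correct---but since the paper itself defers entirely to \cite{BH}, your sketch would have to stand on its own, and those two steps are where a referee would push back.
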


\noindent Actually we are going to use only the last inequality which is a consequence of Talagrand's theorem
(see \cite[Lemma 5]{BH}). We will also need the following:

\begin{lemma} \label{lem:key}
Let $g$ be a non-decreasing and convex map on $\mathbb R$. Then, we have:
\begin{enumerate}
\item For all $p>0$,
\begin{align*}
\|(g-{\rm med}(g))_+ \|_{L_p(\gamma)}^p \gr  \sigma_p^p  [g'(0+)]^p,
\end{align*} where $\sigma_p^p= \frac{ 2^{p/2} }{2\sqrt{\pi}} \Gamma(\frac{p+1}{2})=: \frac{1}{2}\mathbb E |\zeta |^{p}, \; \zeta\sim N(0,1)$.
\item Let $s :=\sqrt{{\rm Var} (g)}/\|g'\|_{L_2(\gamma)}$. Then 
 \begin{align*}
{\rm Var}[g(\zeta) ] \ls C_1 \left[ g' \left( \frac{C_1}{s} - \right) \right]^2.
\end{align*} 
\item  For any $t> 0$ we have:
\begin{align*}
\mathbb P \left( g(\zeta) -g(0)\gr t \sqrt{{\rm Var}[g(\zeta)] } \right) \gr 1-\Phi \left(C_1 \left(\frac{1}{s} +t \right) \right),
\end{align*} where  $C_1>0$ is a universal constant.
\end{enumerate}
\end{lemma}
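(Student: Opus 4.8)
The plan is to prove the three items in the displayed order, since (3) builds directly on (2) and (2) carries all the real content. We may and do assume throughout that $g$ is non-constant with $\|g'\|_{L_2(\gamma)}<\infty$ (otherwise the statements are trivial or vacuous); then ${\rm Var}(g)>0$ and, by the Gaussian Poincar\'e inequality, $s\in(0,1]$. Since $g$ is non-decreasing, $g(0)$ is a median of $g(\zeta)$, so we take ${\rm med}(g)=g(0)$; any smaller choice of median only enlarges the positive part and so makes (1) easier.

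For (1) I would use convexity only at the origin: the subgradient inequality gives $g(x)\gr g(0)+g'(0+)\,x$ for every $x\in\mathbb R$, and $g'(0+)\gr 0$ since $g$ is non-decreasing, hence $(g(x)-g(0))_+\gr g'(0+)\,x_+$ pointwise. Raising to the $p$-th power and integrating against $\gamma$ gives
\[
\|(g-{\rm med}(g))_+\|_{L_p(\gamma)}^p\gr [g'(0+)]^p\int_0^\infty x^p\,d\gamma(x)=\sigma_p^p\,[g'(0+)]^p,
\]
the last identity being the elementary computation $\int_0^\infty x^p\,d\gamma(x)=\tfrac12\mathbb E|\zeta|^p=\sigma_p^p$ (substitute $u=x^2/2$).

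For (2) — the crux — the idea is to feed the convex function $g$ into the weighted Poincar\'e inequality ${\rm Var}_\gamma(g)\ls c_3\int_{\mathbb R}\frac{|g'(t)|^2}{1+t^2}\,d\gamma(t)$ of Theorem \ref{Bobkov-Houdre} and then split the integral at a level $a=a(s)$ of order $1/s$, tuned so that the upper tail is absorbable. On $(-\infty,a)$ convexity makes $g'$ non-decreasing and monotonicity makes it nonnegative, so $\int_{-\infty}^{a}\frac{|g'|^2}{1+t^2}\,d\gamma\ls [g'(a+)]^2\int_{\mathbb R}\frac{d\gamma(t)}{1+t^2}$, a universal constant times $[g'(a+)]^2$. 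On $(a,\infty)$ one bounds $\frac{1}{1+t^2}\ls\frac{1}{1+a^2}$ and uses $\int_a^\infty|g'|^2\,d\gamma\ls\|g'\|_{L_2(\gamma)}^2={\rm Var}(g)/s^2$, so this piece is at most $\frac{{\rm Var}(g)}{s^2(1+a^2)}$. Choosing $a$ with $1+a^2=2c_3/s^2$ (legitimate because $s\ls1$ makes the right side at least $1$) forces the tail piece to be at most $\tfrac12{\rm Var}(g)$, which we move to the left-hand side; since this $a$ satisfies $a<\sqrt{2c_3}/s$, monotonicity of $g'$ yields $g'(a+)\ls g'(C_1/s-)$ once $C_1\gr\sqrt{2c_3}$, and after enlarging $C_1$ to also dominate $2c_3\int_{\mathbb R}\frac{d\gamma(t)}{1+t^2}$ we obtain ${\rm Var}(g)\ls C_1[g'(C_1/s-)]^2$.

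For (3), set $a=C_1/s$. Item (2) gives $\sqrt{{\rm Var}(g)}\ls\sqrt{C_1}\,g'(a-)\ls\sqrt{C_1}\,g'(a+)$, and for $x\gr a$ the subgradient inequality at $a$ together with $g(a)\gr g(0)$ (monotonicity, $a\gr0$) yields $g(x)-g(0)\gr g'(a+)(x-a)\gr\frac{\sqrt{{\rm Var}(g)}}{\sqrt{C_1}}(x-a)$. Hence $g(\zeta)-g(0)\gr t\sqrt{{\rm Var}(g)}$ whenever $\zeta\gr a+\sqrt{C_1}\,t$, so
\[
\mathbb P\big(g(\zeta)-g(0)\gr t\sqrt{{\rm Var}(g)}\big)\gr 1-\Phi\big(\tfrac{C_1}{s}+\sqrt{C_1}\,t\big)\gr 1-\Phi\big(C_1(\tfrac1s+t)\big),
\]
using $\sqrt{C_1}\ls C_1$ (take $C_1\gr1$) and monotonicity of $\Phi$. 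The main obstacle is (2): locating the correct truncation scale $a$ of order $1/s$, verifying that the leftover tail is genuinely absorbable (this is exactly where $s\ls1$, i.e.\ Poincar\'e, enters), and then selecting a single universal $C_1$ that works simultaneously in (2) and in (3) while keeping the one-sided derivatives $g'(\cdot-)$ and $g'(\cdot+)$ pointing in the directions the inequalities require. Items (1) and (3) are then short.
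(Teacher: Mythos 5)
Your proof is correct and follows essentially the same route as the paper: part (1) via the one-sided subgradient inequality at the origin and the identification ${\rm med}(g)=g(0)$, part (2) via the Bobkov--Houdr\'e weighted Poincar\'e inequality with the integral split at a level of order $1/s$ so that the tail is absorbed by $\tfrac12 {\rm Var}(g)$, and part (3) via the supporting line at $a=C_1/s$ combined with the derivative lower bound from (2). The only cosmetic difference is that you bound the tail weight by $1/(1+a^2)$ whereas the paper uses $1/\lambda^2$; both choices of cut-off lie below $\sqrt{2c_3}/s$ and feed into the same conclusion.
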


\begin{proof} Note that convexity of $g$ shows that $g(t)-g(0)\gr t g'(0+)$ for all $t>0$. Combining this with the monotonicity we obtain: 
\begin{align*}
\|(g-{\rm med}(g) )_+\|_p^p = \int_0^\infty [g(t)-g(0)]^p \, d\gamma (t) \gr \frac{(g'(0+))^p}{\sqrt{2\pi}} \int_0^\infty t^p e^{-t^2/2}\, dt,
\end{align*} for all $p>0$. This proves (1).

\smallskip

By  Theorem \ref{Bobkov-Houdre} we have:
\begin{align*}
\frac{1}{c_{3}} {\rm Var}[g(\zeta)]  \ls \int_{\mathbb R} \frac{(g'(t))^2}{1+t^2}\, d\gamma(t) & \ls g'(\lambda -)^2 \int_{\mathbb R}\frac{d\gamma(t)}{1+t^2} +\int_\lambda^\infty \frac{g'(t)^2}{1+t^2} \, d\gamma(t) \\
& \ls  (g'(\lambda -))^2 + \|g'\|_{L_2}^2 /\lambda^2,
\end{align*} for all $\lambda >0$. We choose $\lambda=\sqrt{2c_{3}} /s$. This proves (2).

\smallskip 

Let $s=\sqrt{ {\rm Var}(g)}/ \|g'\|_{L_2}$ and $a= C_1/s$, where $C_1>0$ is the constant from 
part (2). Note that for any $x >a$ we have:
\begin{align} \label{eq:cvx}
g(x) \gr g(a) + (x-a)g'(a+)  \gr g(0)+ \frac{\sqrt{{\rm Var}(g) } }{\sqrt{C_1}} (x-a),
\end{align} where we have used part (2) and the monotonicity of $g$. Hence, for any $t> 0$, we obtain:
\begin{align*}
\left\{ x: x > a+t\sqrt{C_1} \right\} \subseteq \left\{x: g(x)-g(0) \gr t\sqrt{ {\rm Var}[g(\zeta)]} \right\}
\end{align*} where we have used \eqref{eq:cvx}. Finally,
\begin{align*}
\mathbb P\left(g(\zeta)-g(0) \gr t\sqrt{ {\rm Var}[g(\zeta)] } \right) \gr 1-\Phi \left( a+\sqrt{C_1} t \right),
\end{align*} for all $t>0$. This completes the proof.
\end{proof}

\smallskip
 
For any Lipschitz map $f$ on $\mathbb R^n$, recall the {\it over-concentration} and the {\it super-concentration} constant:
\begin{align*}
{\bf ov}(f) = \frac{ \sqrt{ {\rm Var}[f(Z)]} }{ {\rm Lip}(f) } \quad {\rm and} \quad {\bf s}(f)= \frac{ \sqrt{ {\rm Var}[f(Z)] }}{ \left(\mathbb E \| \nabla f(Z)\|_2^2 \right)^{1/2}}.
\end{align*} Note that in view of the Gaussian Poincar\'e inequality \cite{Chen} we have:
\begin{align} \label{eq:comp-s-ov}
{\bf ov}(f) \ls {\bf s}(f) \ls 1.
\end{align}

Our first main result is the following inequality:

\begin{proposition} \label{prop:rev-conc-cvx}
Let $f:\mathbb R^{n} \rightarrow \mathbb R$ be a convex function with $f\in L_2(\gamma_n)$. Then, 
\begin{align*} 
\mathbb P \left(f(Z) \gr M +t\sqrt{ {\rm Var} [f(Z)]} \right) \gr 1-\Phi \left( C \left( t+\frac{1}{{\bf s}(f)}\right) \right), \quad t>0,
\end{align*}
In particular, if $f$ is not super-concentrated, i.e. ${\bf s}(f) \simeq 1$, we have
\begin{align*}
\mathbb P\left( |f(Z)-M| \gr t \sqrt{ {\rm Var} [f(Z)] }\right) \gr ce^{-Ct^2}, \quad t>0, \end{align*} 
where $M={\rm med}(f)$ and $C,c >0$ are universal constants. 
\end{proposition}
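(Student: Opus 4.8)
The strategy is to push the $n$-dimensional problem down to the real line via the Gaussian rearrangement $f^{\ast}$ and then invoke Lemma~\ref{lem:key}. We may assume $f$ is non-constant and $\mathbb E\|\nabla f\|_2^2<\infty$, since otherwise the asserted inequality is trivial or vacuous (if $\mathbb E\|\nabla f\|_2^2=\infty$ then $1-\Phi(C(t+1/{\bf s}(f)))=0$). Since $f$ is convex, $f^{\ast}$ is a convex non-decreasing function of one variable (Lemma~\ref{lem:basic-gauss-rearr}(a) together with the definition of $f^{\ast}$), so $g:=f^{\ast}$ is admissible in Lemma~\ref{lem:key}. The key structural fact is that $f^{\ast}$ transports $\gamma$ to the law of $f(Z)$ (equivalently Lemma~\ref{lem:basic-gauss-rearr}(c)): hence $f^{\ast}(\zeta)$ and $f(Z)$ have the same distribution, so ${\rm Var}[f^{\ast}(\zeta)]={\rm Var}[f(Z)]$ and every median of $f^{\ast}(\zeta)$ is a median of $f(Z)$. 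Moreover, because $f^{\ast}$ is non-decreasing, $\gamma(f^{\ast}(\zeta)\ls f^{\ast}(0))\gr\gamma(\zeta\ls 0)=\tfrac12$ and symmetrically $\gamma(f^{\ast}(\zeta)\gr f^{\ast}(0))\gr\tfrac12$, so $f^{\ast}(0)$ is a median of $f(Z)$ and we may take $M=f^{\ast}(0)$.

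Next I would compare the one-dimensional parameter $s:=\sqrt{{\rm Var}(f^{\ast})}/\|(f^{\ast})'\|_{L_2(\gamma)}$ entering Lemma~\ref{lem:key} with ${\bf s}(f)$. Lemma~\ref{lem:basic-gauss-rearr}(d) with $p=2$ gives $\|(f^{\ast})'\|_{L_2(\gamma)}^2\ls\mathbb E\|\nabla f(Z)\|_2^2$, and since the two variances coincide this yields $s\gr{\bf s}(f)$, i.e. $1/s\ls 1/{\bf s}(f)$. Applying the third part of Lemma~\ref{lem:key} to $g=f^{\ast}$, and using that $1-\Phi$ is decreasing together with $1/s\ls 1/{\bf s}(f)$, we obtain for every $t>0$
\[
\mathbb P\big(f(Z)-M\gr t\sqrt{{\rm Var}[f(Z)]}\,\big)=\mathbb P\big(f^{\ast}(\zeta)-f^{\ast}(0)\gr t\sqrt{{\rm Var}[f^{\ast}(\zeta)]}\,\big)\gr 1-\Phi\big(C_1(1/s+t)\big)\gr 1-\Phi\big(C_1(1/{\bf s}(f)+t)\big),
\]
which is precisely the first displayed inequality (with $C=C_1$).

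For the second assertion, suppose ${\bf s}(f)\simeq 1$, so that $1/{\bf s}(f)\ls C'$. Then $C_1(1/{\bf s}(f)+t)\ls C''(1+t)$, and the elementary tail bound $1-\Phi(x)\gr c\,e^{-x^2}$, valid for all $x\gr 0$, gives $\mathbb P\big(f(Z)\gr M+t\sqrt{{\rm Var}[f(Z)]}\,\big)\gr c\,e^{-Ct^2}$ for all $t>0$; since $\mathbb P(|f(Z)-M|\gr \tau)\gr\mathbb P(f(Z)-M\gr\tau)$ for every $\tau>0$, the two-sided bound follows. The proof is short once the tools are assembled; the only steps that need care are the identification of $f^{\ast}(0)$ as a median of $f(Z)$ and the transfer of the Poincar\'e-type ratio through the rearrangement via Lemma~\ref{lem:basic-gauss-rearr}(d) — this last point is exactly what makes the genuinely $n$-dimensional quantity ${\bf s}(f)$, rather than a one-dimensional surrogate, appear in the estimate.
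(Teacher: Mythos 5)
Your proof is correct and follows essentially the same route as the paper: pass to the Gaussian rearrangement $f^{\ast}$, identify $f^{\ast}(0)$ with a median of $f(Z)$, apply Lemma~\ref{lem:key}(3), and transfer the super-concentration parameter via Lemma~\ref{lem:basic-gauss-rearr}(d). The only differences are that you spell out a few details the paper leaves implicit (the identification of $f^{\ast}(0)$ as a median, the degenerate case $\mathbb E\|\nabla f\|_2^2=\infty$, and the one-sided-to-two-sided step), which is harmless.
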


\begin{proof} Let $f^\ast$ be the Gaussian rearrangement 
of $f$. Then, $f^\ast$ is convex, non-decreasing, equi-measurable with $f$ (Lemma \ref{lem:basic-gauss-rearr}). 
It follows that:
\begin{align*}
\mathbb P \left( f(Z) -M \gr t \sqrt{ {\rm Var} (f) } \right) = \mathbb P \left( f^\ast(\zeta)-f^\ast(0) \gr t \sqrt{ {\rm Var}(f^\ast) } \right) 
\gr 1-\Phi \left(C \left(t+\frac{1}{{\bf s}(f^\ast)} \right) \right),
\end{align*} for all $t>0$, where in the last step we have used Lemma \ref{lem:key} (3). 
Again by Lemma \ref{lem:basic-gauss-rearr} we have that  ${\bf s}(f) \ls {\bf s}(f^\ast)$, and the result follows. \end{proof}

Using the above distributional inequality we may derive lower estimates for the centered moments in a standard fashion.

\begin{corollary} \label{cor:ctrd-moms}
Let $f:\mathbb R^n \rightarrow \mathbb R$ be a convex function with $f\in L_2(\gamma_n)$ and $M={\rm med}(f)$. 
Then for every $ p\gr C/ {\bf s}(f)^2$ we have 
\begin{align} \label{eq:ctrd-moms-1}     \left( \mathbb E | f(Z) - M |^p \right)^{1/p}   \gr c_{1} \sqrt{p}  \sqrt{ {\rm Var}[f(Z)] } ,
\end{align} while for $ 2\ls p \ls C/ {\bf s}(f)^2$ we get
 \begin{align} \label{eq:ctrd-moms-2}
   \left( \mathbb E | f(Z) - M |^{p} \right)^{1/p}   \gr c_2  {\bf s}(f) \sqrt{p}  \sqrt{{\rm Var}[f(Z)] } .
\end{align} Moreover, if $f$ is also Lipschitz we obtain
 \begin{align*} 
    \left( \mathbb E | f(Z) - M |^{p} \right)^{1/p} \gr c_2' {\bf ov}(f) {\bf s}(f) \sqrt{p} {\rm Lip}(f) , \quad  p\gr 2,
\end{align*}
where $C, c_1, c_2, c_2' > 0$ are universal constants.
\end{corollary}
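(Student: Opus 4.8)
The plan is to derive the three moment lower bounds from the distributional inequality in Proposition~\ref{prop:rev-conc-cvx} by the standard device of integrating the tail bound, with the three regimes of $p$ corresponding to where the ``Gaussian'' term $t$ versus the ``shift'' term $1/\mathbf{s}(f)$ dominates in the argument of $\Phi$. Write $\sigma = \sqrt{\mathrm{Var}[f(Z)]}$ and $s = \mathbf{s}(f) \le 1$. From Proposition~\ref{prop:rev-conc-cvx} we have, for every $t>0$,
\begin{align*}
\mathbb{P}\left( f(Z) - M \ge t\sigma \right) \ge 1 - \Phi\left( C\left( t + \tfrac{1}{s} \right) \right) \ge \tfrac{c}{C(t + 1/s)} \exp\left( -\tfrac{C^2}{2}\left( t + \tfrac{1}{s} \right)^2 \right),
\end{align*}
using the standard lower bound $1-\Phi(u) \ge \frac{c}{u} e^{-u^2/2}$ for $u \ge 1$ (valid since $C(t+1/s) \ge C \ge 1$ after possibly enlarging $C$).

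First I would handle the large-$p$ regime $p \ge C/s^2$. Here I restrict attention to thresholds $t \ge 1/s$, so that $t + 1/s \le 2t$ and the tail is at least $\frac{c}{t}e^{-2C^2 t^2}$ on that range. Then
\begin{align*}
\mathbb{E}|f(Z)-M|^p \ge \sigma^p \int_0^\infty p t^{p-1}\, \mathbb{P}(f(Z)-M \ge t\sigma)\, dt \ge c\,\sigma^p \int_{1/s}^{\infty} p t^{p-2} e^{-2C^2 t^2}\, dt.
\end{align*}
A Laplace-type estimate shows this integral is $\gtrsim p^{1/2}\big(\tfrac{c}{\sqrt p}\big)^{p}$ provided the maximizer $t_\ast \simeq \sqrt{p}$ of $t^{p-2}e^{-2C^2 t^2}$ exceeds the cutoff $1/s$, i.e. precisely when $p \gtrsim 1/s^2$, which is the stated hypothesis. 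Taking $p$-th roots yields $(\mathbb{E}|f(Z)-M|^p)^{1/p} \ge c_1 \sqrt{p}\,\sigma$, proving \eqref{eq:ctrd-moms-1}.

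For the small-$p$ regime $2 \le p \le C/s^2$, the dominant term in $C(t+1/s)^2$ is the constant $C^2/s^2$ unless $t \gtrsim 1/s$; the key point is that on the range $0 < t \lesssim 1/s$ the tail is bounded below by a fixed fraction, roughly $\mathbb{P}(f(Z)-M \ge t\sigma) \ge c\, e^{-C/s^2} \ge c\, e^{-Cp}$ for a suitable constant (here I would instead optimize the threshold: picking $t \simeq c\, s\sqrt{p}$ makes the argument of $\Phi$ of order $\sqrt p$, hence the tail is $\gtrsim e^{-Cp}$, say $\ge \tfrac12 \cdot 2^{-p} \cdot (\text{const})$, with room to integrate). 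Concretely, restricting the moment integral to $t \in [c s\sqrt p/2,\, c s \sqrt p]$, on which the integrand $p t^{p-1}$ is of size $\simeq (c s\sqrt p)^{p}$ and the tail is $\ge$ a constant, gives $\mathbb{E}|f(Z)-M|^p \ge c_2^{p}(s\sqrt p)^p \sigma^p$, and taking $p$-th roots gives \eqref{eq:ctrd-moms-2}. Finally, the Lipschitz statement is immediate: since $s\sqrt p \le \sqrt p$ always, \eqref{eq:ctrd-moms-1} already implies $(\mathbb{E}|f(Z)-M|^p)^{1/p} \ge c\, s\sqrt p\, \sigma$ in the large-$p$ regime too, so for all $p \ge 2$ one has $(\mathbb{E}|f(Z)-M|^p)^{1/p} \ge c_2 s \sqrt p\, \sigma = c_2\, \mathbf{s}(f)\sqrt p\, \sqrt{\mathrm{Var}(f)}$; multiplying and dividing by $\mathrm{Lip}(f)$ and recalling $\sqrt{\mathrm{Var}(f)} = \mathbf{ov}(f)\,\mathrm{Lip}(f)$ turns this into $c_2'\, \mathbf{ov}(f)\mathbf{s}(f)\sqrt p\, \mathrm{Lip}(f)$.

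The main obstacle is making the Laplace-method estimates on the truncated integrals fully quantitative and checking that the cutoff $1/s$ falls on the correct side of the Gaussian maximizer $t_\ast \simeq \sqrt p$ exactly at the threshold $p \simeq 1/s^2$ — this is where the universal constant $C$ in the hypotheses $p \ge C/\mathbf{s}(f)^2$ and $p \le C/\mathbf{s}(f)^2$ must be chosen compatibly with the constant $C$ coming out of Proposition~\ref{prop:rev-conc-cvx}. Everything else is the routine ``integrate the tail, extract $p$-th root'' computation, together with the trivial observation $\mathbf{s}(f) \le 1$ needed to pass from \eqref{eq:ctrd-moms-1} to the Lipschitz bound.
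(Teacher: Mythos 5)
Your treatment of the large-$p$ regime is essentially the paper's: restrict the tail integral to $t\ge 1/\mathbf{s}(f)$ (so the shift term is absorbed into $t$), lower-bound the tail by $e^{-C_2 t^2}$, and extract the $\sqrt{p}$ from a Laplace/Gamma-type integral (the paper uses the clean substitution and the elementary inequality $q\int_a^\infty t^{q-1}e^{-t}\,dt\gr (q/e)^q$ for $q\gr a+1$, which you may want to invoke explicitly rather than leave as a ``Laplace-type estimate''). The final Lipschitz statement, via $\mathbf{s}(f)\ls 1$ and $\sqrt{\mathrm{Var}(f)}=\mathbf{ov}(f)\,\mathrm{Lip}(f)$, is also fine.

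However, your small-$p$ argument has a genuine error. You claim that at $t\simeq c\,\mathbf{s}(f)\sqrt{p}$ the argument of $\Phi$ is of order $\sqrt{p}$, and elsewhere that $e^{-C/\mathbf{s}(f)^2}\gr e^{-Cp}$. Both assertions require $p\gtrsim 1/\mathbf{s}(f)^2$, but the stated regime is $2\ls p\ls C/\mathbf{s}(f)^2$, which allows $p\ll 1/\mathbf{s}(f)^2$ (for instance $p=2$ with $\mathbf{s}(f)$ tiny). In that case the argument of $\Phi$ is dominated by the shift $1/\mathbf{s}(f)\gg\sqrt{p}$ and the tail is of order $e^{-C/\mathbf{s}(f)^2}\ll e^{-Cp}$, so restricting the moment integral to $t\simeq\mathbf{s}(f)\sqrt{p}$ does not deliver the claimed lower bound. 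The truncated-tail method simply does not reach \eqref{eq:ctrd-moms-2} from Proposition~\ref{prop:rev-conc-cvx}.

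The point you missed is that \eqref{eq:ctrd-moms-2} is a trivial consequence of the variance lower bound, independent of the concentration estimate: since $p\mapsto(\mathbb E|f(Z)-M|^p)^{1/p}$ is non-decreasing and $\mathbb E|f(Z)-M|^2\gr\mathrm{Var}(f)$, one has $(\mathbb E|f(Z)-M|^p)^{1/p}\gr\sqrt{\mathrm{Var}(f)}$ for all $p\gr 2$, and in the regime $p\ls C/\mathbf{s}(f)^2$ this already beats $c\,\mathbf{s}(f)\sqrt{p}\sqrt{\mathrm{Var}(f)}$ because $\mathbf{s}(f)\sqrt{p}\ls\sqrt{C}$. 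This is exactly what the paper does; the fact that the target right-hand side of \eqref{eq:ctrd-moms-2} is bounded by a constant multiple of $\sqrt{\mathrm{Var}(f)}$ in this regime should have been the hint that no tail integration is needed there.
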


\begin{proof}
By Proposition \ref{prop:rev-conc-cvx} we have that for every $t\gr 1/ {\bf s}(f)$,
$$\mathbb P \left( f (Z) \gr M + t \sqrt{ {\rm Var}[f(Z)]} \right) \gr \Phi\left(- C_{1}t \right) \gr e^{-C_2 t^2} .$$ 
It follows that
\begin{align*} 
\mathbb E | f(Z) - M |^{p} &= p  \left( {\rm Var} [f(Z)] \right)^{p/2} \int_0^\infty t^{p-1} \mathbb P\left( |f (Z) - M| \gr t \sqrt{ {\rm Var} [f(Z)]}\right) dt \\
&\gr p  \left( {\rm Var} [f(Z)] \right)^{p/2} \int_{1/{\bf s}(f)}^\infty t^{p-1} e^{-C_2t^2}  dt \\
&= \frac{p}{2} \left( C_2^{-1} {\rm Var} [f(Z)] \right)^{p/2} \int_{C_2/{\bf s}(f)^2}^\infty t^{\frac{p}{2}-1} e^{-t} \, dt.
\end{align*}
Using the elementary inequality 
\[ a>0 , \, q\gr a+1, \quad  q\int_a^{\infty} t^{q-1} e^{ - t} \, dt \gr (q/e)^q, \] 
we conclude the assertion for $p\gr \frac{2C_2}{ {\bf s}(f)^2 }$. For $ 2\ls p \ls \frac{2C_2}{ {\bf s}(f)^{2}}$ we clearly have 
$$  \left( \mathbb E | f(Z) - M |^{p} \right)^{1/p} \gr  \frac{ \sqrt{p} \sqrt{{\rm Var}[f(Z)] } }{ \sqrt{p} } 
\gr  \frac{{\bf s}(f) }{\sqrt{2C_2}} \sqrt{p} \sqrt{{\rm Var}[f(Z)]} .$$
If $f$ is also Lipschitz the assertion follows immediately by using the definitions. 
\end{proof}

We are ready to prove our second main result. As a consequence we obtain the announced Theorem \ref{thm:main} in the Introduction. 
In fact we prove something slightly more:

\begin{theorem} \label{thm:main-2}
Let $f:\mathbb R^n\to \mathbb R$ be a convex, Lipschitz map with $L={\rm Lip}(f)$. Then, we have the following: 
\begin{align} \label{eq:rev-conc-s}
\mathbb P\left( |f(Z) - {\rm med}(f)| \gr t L \right) \gr 
c \tau^4 e^{-C \frac{t^2}{ \tau^2} \log (e/\tau ) }, \quad t>0,
\end{align} where $\tau={\bf ov}(f) {\bf s}(f)$. In particular, we get:
\begin{align} \label{eq:rev-conc-lip}
\mathbb P \left( | f(Z) -{\rm med}(f) | \gr t L \right) \gr c ({\bf ov}(f))^8 e^{- C \frac{t^2}{({\bf ov}(f) )^4} \log(e/ {\rm ov}(f))}, \quad t>0,
\end{align} where $C,c>0$ are universal constants.
\end{theorem}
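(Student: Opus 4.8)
The plan is to combine the lower moment bound from Corollary \ref{cor:ctrd-moms} with the classical Gaussian concentration upper bound \eqref{eq:ineq-con}, and feed the resulting information through a Paley--Zygmund type argument. First I would record that, by Corollary \ref{cor:ctrd-moms}, for every $p \gr 2$ we have $\left( \mathbb E |f(Z) - {\rm med}(f)|^p \right)^{1/p} \gr c_2' \, \tau \sqrt{p} \, L$ with $\tau = {\bf ov}(f){\bf s}(f)$, while on the other hand \eqref{eq:ineq-con} (or \eqref{eq:var-lip} together with moment growth) gives the matching upper bound $\left( \mathbb E |f(Z) - {\rm med}(f)|^p \right)^{1/p} \ls C \sqrt{p} \, L$ for all $p \gr 2$. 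So the $L_p$ norm of $f(Z) - {\rm med}(f)$ grows like $\sqrt{p}L$ up to the factor $\tau$, and the whole game is to convert this two-sided moment comparison into a genuine lower bound on a tail.

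The main step is a Paley--Zygmund (small-ball-at-a-level) argument applied to the random variable $|f(Z) - {\rm med}(f)|^p$ for a well-chosen $p$. Concretely, fix a level $t$ and choose $p = p(t) \simeq t^2/(\tau^2 \log(e/\tau))$ (this is where the exponent in \eqref{eq:rev-conc-s} comes from). Using $\mathbb E Y \ls s + (\mathbb E Y^q)^{1/q}(\mathbb P(Y > s))^{1-1/q}$ with $Y = |f(Z)-{\rm med}(f)|^p$, $s = (tL)^p$, and $q$ a fixed small power (say $q=2$), the lower bound on $\mathbb E Y^{1/p} = \|f(Z)-{\rm med}(f)\|_p$ forces $\mathbb P(|f(Z)-{\rm med}(f)|^p > (tL)^p)$ to be bounded below, provided $(tL)^p$ is a definite fraction of $(\mathbb E Y)$; the upper moment bound $(\mathbb E Y^q)^{1/(pq)} \ls C\sqrt{pq}\,L$ controls the error term. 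Balancing: we need $(c_2'\tau\sqrt{p}L)^p$ to dominate $(tL)^p$ by a constant factor, i.e. $c_2'\tau\sqrt{p} \gr e \cdot t / (\text{something})$ fails unless $\tau\sqrt p \gr $ const$\cdot t$; but we also need $p$ large enough that the Corollary's regime $p \gr C/{\bf s}(f)^2$ is entered so that \eqref{eq:ctrd-moms-1} (the sharp $\sqrt p$ bound without the extra ${\bf s}(f)$ factor) applies. Reconciling these two constraints — and absorbing the loss when $p < C/{\bf s}(f)^2$, where only the weaker \eqref{eq:ctrd-moms-2} holds, producing the extra $\log(e/\tau)$ — is the delicate bookkeeping, and I expect \emph{that} to be the main obstacle: tracking how the bound degrades depending on whether $p(t)$ lands above or below the threshold $C/{\bf s}(f)^2$, and checking the two cases patch together into the single clean statement \eqref{eq:rev-conc-s}.

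Once \eqref{eq:rev-conc-s} is established, the passage to \eqref{eq:rev-conc-lip} is immediate: by the Gaussian Poincar\'e comparison \eqref{eq:comp-s-ov} we have ${\bf s}(f) \ls 1$ and ${\bf ov}(f) \ls {\bf s}(f)$, hence $\tau = {\bf ov}(f){\bf s}(f) \gr {\bf ov}(f)^2$ is false in that direction — rather $\tau \ls {\bf ov}(f)$, so I would instead note $\tau \gr {\bf ov}(f) \cdot {\bf ov}(f) = {\bf ov}(f)^2$ using ${\bf s}(f) \gr {\bf ov}(f)$, giving $\tau \gr {\bf ov}(f)^2$; then since $x \mapsto c x^4 e^{-C (t^2/x^2)\log(e/x)}$ is increasing in $x$ on the relevant range, substituting $\tau \gr {\bf ov}(f)^2$ into \eqref{eq:rev-conc-s} yields \eqref{eq:rev-conc-lip} with the stated powers (the $8 = 2\cdot 4$ and the $4 = 2\cdot 2$ in the exponent come exactly from this substitution). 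Finally, Theorem \ref{thm:main} of the introduction follows from \eqref{eq:rev-conc-lip}: the hypothesis ${\rm Var}[f(Z)] \gr \alpha^2 L^2$ says precisely ${\bf ov}(f) \gr \alpha$, so monotonicity in ${\bf ov}(f)$ gives the claimed $c(\alpha) \gr c\alpha^8$ and $C(\alpha) \ls C\alpha^{-4}\log(e/\alpha)$.
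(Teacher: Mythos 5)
Your overall strategy — combine the unified lower moment bound $\|f-M\|_p \gr c\,\tau\sqrt{p}\,L$ from Corollary \ref{cor:ctrd-moms} with the upper bound $\|f-M\|_p \ls C\sqrt{p}\,L$ from classical concentration, and run Paley--Zygmund at level $t$ with a well-chosen $p=p(t)$ — is exactly the paper's, and the passage from \eqref{eq:rev-conc-s} to \eqref{eq:rev-conc-lip} via $\tau={\bf ov}(f){\bf s}(f)\gr {\bf ov}(f)^2$ and monotonicity in $\tau$ is also correct. But the key quantitative step has a genuine error: you choose $p(t)\simeq t^2/(\tau^2\log(e/\tau))$, and this choice does not satisfy the level condition. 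Paley--Zygmund needs $tL\ls \tfrac12\|f-M\|_p$, and since the lower bound is $\|f-M\|_p\gr c\,\tau\sqrt p\,L$, you need $\sqrt p \gtrsim t/\tau$, i.e. $p\gtrsim t^2/\tau^2$. With your $p$ divided by $\log(e/\tau)$, the required inequality becomes $t/\sqrt{\log(e/\tau)}\gtrsim t$, which fails whenever $\tau$ is bounded away from $1$. The correct choice is $p(t)\simeq \max\{1, t^2/\tau^2\}$ (the paper takes $p=2\max\{1,(t/t_0)^2\}$ with $t_0\simeq\tau$); the $\log(e/\tau)$ in the exponent of \eqref{eq:rev-conc-s} is not designed into $p$ but falls out of the Paley--Zygmund ratio, since $(\|f-M\|_p/\|f-M\|_{2p})^{2p}\simeq (c\tau)^{2p}=\exp(-2p\log(1/(c\tau)))$, and the prefactor $\tau^4$ comes from evaluating this ratio at the clipped value $p=2$ when $t\lesssim\tau$.

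A secondary point: your worry about ``tracking how the bound degrades depending on whether $p(t)$ lands above or below the threshold $C/{\bf s}(f)^2$'' is a red herring. You do not need to invoke \eqref{eq:ctrd-moms-1} and \eqref{eq:ctrd-moms-2} separately here. The Lipschitz form of Corollary \ref{cor:ctrd-moms} already merges both regimes into the single inequality $\|f-M\|_p\gr c\,\tau\sqrt p\,L$ valid for all $p\gr 2$, so the only case distinction needed in this proof is whether $t\lesssim\tau$ (take $p=2$) or $t\gtrsim\tau$ (take $p\simeq t^2/\tau^2$). Fixing the choice of $p$ and dropping the threshold discussion reduces your sketch to the paper's proof.
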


\begin{proof} We prove only the first assertion, since the particular case follows from \eqref{eq:rev-conc-s} by taking into account \eqref{eq:comp-s-ov}. For this end, set $M={\rm med}(f)$ and note that from Corollary \ref{cor:ctrd-moms} we have:
\begin{align} \label{eq:rev-2}
\left( \mathbb E | f(Z) - M |^{p} \right)^{1/p}   \gr c_1  \tau \sqrt{p} L, \quad p \gr 2.
\end{align}
Set $t_0:= c_1 \tau / \sqrt{2}$. Then, for any $t>0$ we may choose $p=p(t)= 2\max\{ 1, (t/t_0)^2\}$ and take into account \eqref{eq:rev-2} to write:
\begin{align*}
\mathbb P \left( |f(Z)- M| \gr tL\right) \gr \mathbb P \left( |f(Z)-M| \gr \frac{1}{2} \|f-M\|_p \right) \gr (1-2^{-p})^2 \left( \frac{\|f-M\|_p}{\|f-M\|_{2p}} \right)^{2p},
\end{align*} where in the last step we have used the Paley-Zygmund inequality, see e.g. \cite{BLM}. 
One more application of \eqref{eq:rev-2} in conjunction with $\|f-M\|_{2p}\ls C_1 \sqrt{p} L$, yields:
\begin{align*}
\mathbb P \left( |f(Z)- M| \gr tL\right) \gr \frac{1}{2} (c_2\tau)^{2p} \gr c_3 \tau^4 \exp(- C_3 (t/t_0)^2 \log(e/\tau) ),
\end{align*} as required. The proof is complete.
\end{proof}

Summarizing we conclude the following characterization of concentration in terms of the Lipschitz constant.

\begin{corollary} \label{cor:LMS-cvx}
Let $f:\mathbb R^n \to \mathbb R$ be convex and Lipschitz map with $L={\rm Lip}(f)$. The following are equivalent: 
\begin{itemize}
\item [a.] For every $t>0$ we have:
\begin{align*}
\mathbb P( |f(Z) - {\rm med}(f) |\gr t L) \gr a_1\exp(- t^2/ A_1^2 ).
\end{align*}
\item [b.] For all $p\gr 2$ we have:
\begin{align*}
\left( \mathbb E|f(Z)- {\rm med}(f) |^p \right)^{1/p} \gr A_2 \sqrt{p} L.
\end{align*}
\item [c.] We have:
\begin{align*}
{\rm Var}[f(Z)] \gr A_{3}^{2} L^2,
\end{align*}
\end{itemize} where $Z\sim N({\bf 0},I_n)$ and the constants $a_1,A_1, A_2, A_3>0$ depend on each other.
\end{corollary}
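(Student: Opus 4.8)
The plan is to prove the chain of implications $\mathrm{(a)}\Rightarrow\mathrm{(c)}\Rightarrow\mathrm{(b)}\Rightarrow\mathrm{(a)}$, each step being a short deduction from what has already been established. Write $M={\rm med}(f)$ throughout, and recall the elementary fact (via Chebyshev's inequality) that every $\xi\in L_2$ satisfies $|{\rm med}(\xi)-\mathbb E\xi|\le\sqrt{2\,{\rm Var}(\xi)}$, so that
\begin{equation*}
\mathbb E|f(Z)-M|^2={\rm Var}[f(Z)]+(M-\mathbb Ef)^2\le 3\,{\rm Var}[f(Z)];
\end{equation*}
this lets us pass freely between the $L_2$-deviation of $f$ from its median and its variance. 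For $\mathrm{(a)}\Rightarrow\mathrm{(c)}$, integrate the assumed tail bound in polar form (substituting $s=tL$) to get
\begin{equation*}
\mathbb E|f(Z)-M|^2=\int_0^\infty 2s\,\mathbb P\big(|f(Z)-M|\ge s\big)\,ds\ge 2a_1L^2\int_0^\infty t\,e^{-t^2/A_1^2}\,dt=a_1A_1^2L^2,
\end{equation*}
whence ${\rm Var}[f(Z)]\ge\tfrac13\mathbb E|f(Z)-M|^2\ge\tfrac13a_1A_1^2L^2$, i.e.\ $\mathrm{(c)}$ with $A_3^2=\tfrac13 a_1A_1^2$. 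For $\mathrm{(c)}\Rightarrow\mathrm{(b)}$, note that $\mathrm{(c)}$ says exactly ${\bf ov}(f)=\sqrt{{\rm Var}[f(Z)]}/L\ge A_3$, and then \eqref{eq:comp-s-ov} also gives ${\bf s}(f)\ge{\bf ov}(f)\ge A_3$; feeding both into the last estimate of Corollary \ref{cor:ctrd-moms} yields $\big(\mathbb E|f(Z)-M|^p\big)^{1/p}\ge c_2'\,{\bf ov}(f)\,{\bf s}(f)\,\sqrt p\,L\ge c_2'A_3^2\sqrt p\,L$ for all $p\ge2$, which is $\mathrm{(b)}$ with $A_2=c_2'A_3^2$.

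The remaining implication $\mathrm{(b)}\Rightarrow\mathrm{(a)}$ is the Paley--Zygmund argument already performed in the proof of Theorem \ref{thm:main-2}. Gaussian concentration \eqref{eq:ineq-con} supplies the matching upper bound $\|f(Z)-M\|_q\le C_0\sqrt q\,L$ for every $q\ge1$. Given $t>0$, set $p=2\max\{1,(t/t_0)^2\}$ with $t_0=A_2/\sqrt2$, so that $\mathrm{(b)}$ forces $tL\le\tfrac12\|f(Z)-M\|_p$; then the Paley--Zygmund inequality combined with $\mathrm{(b)}$ and the upper bound gives
\begin{equation*}
\mathbb P\big(|f(Z)-M|\ge tL\big)\ge(1-2^{-p})^2\left(\frac{\|f(Z)-M\|_p}{\|f(Z)-M\|_{2p}}\right)^{2p}\ge c\left(\frac{A_2}{C_0\sqrt2}\right)^{2p}\ge a_1e^{-t^2/A_1^2}
\end{equation*}
for suitable $a_1,A_1>0$ depending only on $A_2$ and the universal constants, using $2\le p\le 2+2(t/t_0)^2$ in the last step. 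This closes the cycle and makes transparent that $a_1,A_1,A_2,A_3$ determine one another.

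All the analytic substance — Ehrhard's inequality, the Talagrand and Bobkov--Houdr\'e estimates, the Gaussian rearrangement and the Paley--Zygmund trick — has already been absorbed into Proposition \ref{prop:rev-conc-cvx}, Corollary \ref{cor:ctrd-moms} and Theorem \ref{thm:main-2}, so I do not expect any serious obstacle here. The only points needing a little care are the repeated mean/median comparisons, handled once and for all by the Chebyshev remark above, and the bookkeeping that exhibits each of the four constants as a function of the others.
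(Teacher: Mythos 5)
Your proof is correct, and it relies on exactly the same analytic substance as the paper (the moment lower bound from Corollary~\ref{cor:ctrd-moms} plus the Paley--Zygmund argument, i.e.\ the content of Theorem~\ref{thm:main-2}), but you traverse the implication cycle in the opposite direction, $\mathrm{(a)}\Rightarrow\mathrm{(c)}\Rightarrow\mathrm{(b)}\Rightarrow\mathrm{(a)}$, rather than the paper's $\mathrm{(a)}\Rightarrow\mathrm{(b)}\Rightarrow\mathrm{(c)}\Rightarrow\mathrm{(a)}$. The paper's order is slightly more economical: the two ``downhill'' implications $\mathrm{(a)}\Rightarrow\mathrm{(b)}\Rightarrow\mathrm{(c)}$ (integrate the tail to get moments; take $p=2$ and compare mean and median to get variance) are trivial one--liners valid for \emph{any} measurable function with no convexity, and the only implication requiring convexity, $\mathrm{(c)}\Rightarrow\mathrm{(a)}$, is delegated entirely to Theorem~\ref{thm:main-2}. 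Your route splits the hard implication into $\mathrm{(c)}\Rightarrow\mathrm{(b)}$ (via Corollary~\ref{cor:ctrd-moms}) and $\mathrm{(b)}\Rightarrow\mathrm{(a)}$ (your own Paley--Zygmund computation), which is just the proof of Theorem~\ref{thm:main-2} unpacked inline; this is fine, but it makes \emph{two} of the three arrows depend on convexity and obscures that only one arrow in the cycle is nontrivial. Both routes correctly track the constants, and your mean/median reduction at the start (with $\sqrt{2\,{\rm Var}}$ from Chebyshev) is valid, so the proof stands as written.
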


\begin{proof}
Note that the implications (a) $\Rightarrow$ (b) $\Rightarrow$ (c) are immediate and they hold for any measurable 
function.  The implication (c) $\Rightarrow$ (a) follows from Theorem \ref{thm:main-2}. \end{proof}

\begin{remarks} 1. All previous results can be equivalently stated with the mean, in the light of 
\begin{align*}
\frac{1}{2} \left( \mathbb E |\xi -\xi' |^p \right)^{1/p} \ls \left( \mathbb E|\xi -m|^p \right)^{1/p} \ls 
2 \left( \mathbb E| \xi- \mathbb E\xi |^p \right)^{1/p} \ls 2 \left( \mathbb E|\xi -\xi'|^p \right)^{1/p} ,
\end{align*} for all $1\ls p<\infty$, where $\xi$ is any random variable, $m$ a median of $\xi$, and $\xi'$ an independent copy
of $\xi$. 

\smallskip

\noindent 2. It might worth mentioning that \eqref{eq:ctrd-moms-1}, 
should be compared with the known fact for norms \cite{LMS}:
\begin{align*}
\left( \mathbb E \left| h(Z)- M\right|^p\right)^{1/p} \gr c\sqrt{p} {\rm Lip}( h ), \quad p\gr k(h),
\end{align*} for any norm $h$ on $\mathbb R^n$, where $k(h)= ( \mathbb E[ h(Z)] / {\rm Lip}(h ) )^2$.

\smallskip

\noindent 3. In the range $t>1$, we obtain dependence $A_1\gr cA_3$ which is clearly optimal. Furthermore, if ${\bf s}(f)\simeq 1$, the above
dependence also holds for the full range of $t$. However, this may suggest that 
the restriction on $t$ in Proposition \ref{prop:rev-conc-cvx} (or the restriction on $p$ in Corollary \ref{cor:ctrd-moms}) is redundant. 
The following example shows that this is not the case. 
\end{remarks}

\begin{example}
Let $\alpha \gg 1$ and let $g_\alpha: \mathbb R \to \mathbb R$ be the function defined by:
\begin{align*}
g_\alpha(t)= c_\alpha(t-\alpha)_+, \quad c_\alpha=(1-\Phi(\alpha))^{-1/2}.
\end{align*} Then, ${\bf s}(g_\alpha)\simeq \alpha^{-1}$ and 
\begin{align*}
\mathbb P \left( g_\alpha -{\rm med}(g_\alpha) > t\sqrt{ {\rm Var} (g_\alpha)} \right) \ls 1-\Phi \left(\alpha + \frac{ct}{\alpha c_\alpha} \right),
\end{align*} for all $t>0$.

Indeed; we have the asymptotic estimate
\begin{align}
\int_\alpha^\infty \frac{d\gamma(t)}{1+t^2} \sim \frac{1-\Phi(\alpha)}{\alpha^2}, \quad \alpha\to \infty.
\end{align} Thus, we may write:
\begin{align*}
\int \frac{(g_\alpha' (t) )^2}{1+t^2} \, d\gamma(t) = c_\alpha^2 \int_\alpha^\infty \frac{d\gamma(t)}{1+t^2} \simeq \alpha^{-2}, \quad \alpha\gr 2.
\end{align*} We may compute that:
\begin{align*}
{\rm Var}(g_\alpha) \simeq \int \frac{(g_\alpha'(t))^2}{1+t^2} \, d\gamma(t) \simeq \alpha^{-2}.
\end{align*} In addition we have:
\begin{align*}
\int (g_\alpha'(t))^2 \, d\gamma(t) = c_\alpha^2 \int_\alpha^\infty d\gamma(t)=1.
\end{align*} It follows that $s(g_\alpha) \simeq 1/ \alpha$ whereas $g_\alpha'(t)=0$ for $t<\alpha$ and 
$g_\alpha'(t)=c_\alpha >0$ for $t>\alpha$. \prend

\end{example}

In particular, the above example shows that for $t \simeq \frac{1}{{\bf s}(g_\alpha)}$ the estimate in Proposition \ref{prop:rev-conc-cvx} 
is attained and that one cannot expect super-gaussian behavior for $t\ll \frac{1}{ {\bf s}(g_\alpha)}$. The discussion shows that there are mainly 
two reasons for which the classical concentration may fail to give the correct asymptotics. First the super-concentration constant
may affect the range of $t$'s and second the over-concentration constant which is apparent on the lower estimate.

\subsection{More comments on the method}

In this subsection we discuss further applications of the methods and techniques used in our result. Mainly, we present
several applications of Ehrhard's inequality which we believe are of independent interest.

\subsubsection{On the skewness of Gaussian distribution for convex functions}
Here we show an immediate consequence of Ehrhard's inequality in the spirit of Kwapien's remark from \cite{Kwa}. The author in 
\cite{Kwa} shows that for any convex function $f$ on $\mathbb R^n$ the expectation of $f(Z), \; Z\sim N({\bf 0},I_n)$ is at least as large as
its median. This fact can be interpreted as the distribution of $f(Z)$ being right-skewed. 
Another fact which illustrates this behavior is that the distribution must deviate less below its median than above its median, 
which is intuitively clear. Next statement is a rigorous proof of this fact.

\begin{proposition}
Let $f:\mathbb R^n\to \mathbb R$ be a convex map. Then, for any $t>0$ we have:
\begin{align*}
\mathbb P(f(Z) \ls {\rm med}(f)-t) \ls \mathbb P(f(Z) > {\rm med}(f)+t), \quad Z\sim N({\bf 0}, I_n).
\end{align*}
\end{proposition}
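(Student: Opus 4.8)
The plan is to obtain this from Ehrhard's inequality (Theorem~\ref{thm:Ehr-ineq}) via the concavity of $u\mapsto \Phi^{-1}\!\left(\gamma_n(\{f\ls u\})\right)$, which is essentially Lemma~\ref{lem:basic-gauss-rearr}(a). Write $G(u):=\gamma_n(\{f\ls u\})$ and $H:=\Phi^{-1}\circ G$. First I would record the elementary inclusion $(1-\lambda)\{f\ls a\}+\lambda\{f\ls b\}\subseteq\{f\ls (1-\lambda)a+\lambda b\}$ valid for convex $f$ and any reals $a,b$ and $\lambda\in(0,1)$, which holds simply because $f((1-\lambda)x+\lambda y)\ls (1-\lambda)f(x)+\lambda f(y)$. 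Feeding the two \emph{convex} sets $\{f\ls a\}$ and $\{f\ls b\}$ into Theorem~\ref{thm:Ehr-ineq} and combining with this inclusion and the monotonicity of $\Phi^{-1}$ yields
\[
H\big((1-\lambda)a+\lambda b\big)\;\gr\;(1-\lambda)H(a)+\lambda H(b),
\]
i.e. $H$ is concave on $\{G>0\}$. (This is exactly Lemma~\ref{lem:basic-gauss-rearr}(a), since $H=(f^\ast)^{-1}$.)

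Next I would choose a convenient median. A finite convex function on $\mathbb R^n$ is continuous, so its range is an interval, and it is unbounded above unless $f$ is constant; hence, unless $f$ is constant, $f(Z)$ is supported on an interval $[m_0,\infty)$ with $m_0=\operatorname{ess\,inf}f$, and by the concavity of $H$ just established, $H$ — hence $G$ — is continuous on $(m_0,\infty)$. Consequently the only way $G$ can fail to take the value $1/2$ is by jumping across it at $m_0$, which forces an atom of mass $\gr 1/2$ at $m_0$; but in that degenerate case (as well as when $f$ is constant) one has $\mathbb P(f(Z)\ls \operatorname{med}(f)-t)=0$ for every $t>0$ and there is nothing to prove. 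So I may assume there is a median $M$ with $G(M)=1/2$, and therefore $H(M)=\Phi^{-1}(1/2)=0$.

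Finally I would apply midpoint concavity of $H$ at $M$: for $t>0$,
\[
0\;=\;H(M)\;=\;H\!\Big(\tfrac12(M-t)+\tfrac12(M+t)\Big)\;\gr\;\tfrac12H(M-t)+\tfrac12H(M+t),
\]
so $\Phi^{-1}[G(M-t)]+\Phi^{-1}[G(M+t)]\ls 0$. Using $\Phi^{-1}(1-p)=-\Phi^{-1}(p)$ and the monotonicity of $\Phi^{-1}$, this rearranges to $G(M-t)\ls 1-G(M+t)$, that is $\mathbb P(f(Z)\ls M-t)\ls \mathbb P(f(Z)>M+t)$, which is the assertion. (Alternatively one can avoid choosing the median precisely: applying Theorem~\ref{thm:Ehr-ineq} to the convex sets $\{f\ls M-t\}$ and $\{f<M+t\}$, whose half-sum lies in $\{f<M\}$, gives for \emph{every} median $M$ the inequality $\mathbb P(f(Z)\ls M-t)\ls\mathbb P(f(Z)\gr M+t)$, and the atom discussion above upgrades $\gr$ to $>$ since $M+t$ lies strictly above $\operatorname{ess\,inf}f$.)

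The step I expect to demand the most care is precisely this median/atom bookkeeping: one must justify that $u\mapsto\gamma_n(\{f\ls u\})$ has no interior jumps — so that a median $M$ with $\gamma_n(\{f\ls M\})=1/2$ exists, on which the clean identity $H(M)=0$ rests — and separate off the genuinely degenerate case where the mass at $\operatorname{ess\,inf}f$ already exceeds $1/2$. Everything else (the inclusion, the invocation of Ehrhard, and the final rearrangement) is routine.
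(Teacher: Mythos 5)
Your proposal is correct and follows the same route as the paper: establish the concavity of $u\mapsto \Phi^{-1}\circ\gamma_n(\{f\ls u\})$ via Ehrhard's inequality, apply midpoint concavity at a median $M$ with $\gamma_n(\{f\ls M\})=1/2$, and rearrange using $\Phi^{-1}(1-p)=-\Phi^{-1}(p)$. The only difference is that you spell out the justification for the degenerate case (the paper merely asserts ``otherwise $\inf f=m$'' in a parenthetical), which is a useful clarification but not a change of method.
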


\noindent {\it Proof.} Let $m={\rm med}(f)$. We may assume without loss of generality that $\mathbb P(f(Z)\ls m)=1/2$ (otherwise $\inf f=m$
and there is nothing to prove). The map $t\mapsto g(t)= \Phi^{-1}\circ \gamma_n( f\ls t)$ is concave and $g(m)=0$. Thus,
\begin{align*}
\frac{g(m-t)+g(m+t)}{2} \ls g\left(\frac{(m-t) +(m+t)}{2} \right) =g(m)=0.
\end{align*} Finally, recall the property 
$$-g(s) = - \Phi^{-1} [\gamma_n( f\ls s)] = \Phi^{-1}[1-\gamma_n(f\ls s) ] = \Phi^{-1} [\gamma_n(f> s)], \quad s\in \mathbb R.$$ 
Therefore, 
\begin{align*}
\Phi^{-1} \circ \mathbb P( f(Z) \ls m-t) =g(m-t) \ls -g(m+t) = \Phi^{-1} \circ \mathbb P( f(Z)  > m+t).
\end{align*} The result follows by the monotonicity of $\Phi^{-1}$. \prend

\subsubsection {A small deviation inequality revisited.} 

The following theorem has been proved 
in \cite{PVsd} with a worst (universal) constant. The proof in \cite{PVsd} uses 
again Ehrhard's inequality but one works with the inverse of the Gaussian rearrangement. Here we give an alternative short 
proof using Gaussian rearrangements directly and we obtain the optimal constant.
\begin{theorem}
Let $f:\mathbb R^n\to \mathbb R$ be a convex map with $f\in L_1(\gamma_n)$. Then, we have
\begin{align*}
\gamma_n\left( \left\{ f- {\rm med}(f) < -t \| (f- {\rm med}(f))_+\|_{L_1(\gamma_n)} \right \} \right) \ls \Phi \left( -\frac{ t}{\sqrt{2\pi}} \right) , \quad t>0. 
\end{align*} The equality is attained for affine maps $x\mapsto \langle x, u \rangle + v$, where $u,v \in \mathbb R^n, \; u\neq 0$.
\end{theorem}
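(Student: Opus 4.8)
The plan is to transport everything to the real line via the Gaussian rearrangement $f^\ast$ of $f$. By Lemma~\ref{lem:basic-gauss-rearr}, $f^\ast$ is convex, non-decreasing and equi-measurable with $f$; in particular $f^\ast$ pushes $\gamma$ forward to the law of $f(Z)$, so that $\|(f-{\rm med}(f))_+\|_{L_1(\gamma_n)} = \int_{\mathbb R}(f^\ast-{\rm med}(f))_+\,d\gamma$ and $\gamma_n(\{f-{\rm med}(f)<-s\}) = \gamma(\{f^\ast-{\rm med}(f)<-s\})$ for every $s>0$. Since $f^\ast$ is non-decreasing, $f^\ast(0)$ is a median of $f^\ast(\zeta)$, hence of $f(Z)$; after normalising I would assume ${\rm med}(f)=f^\ast(0)=0$ (when the median is not unique this is harmless, as the medians of $f^\ast(\zeta)$ form an interval containing $f^\ast(0)$). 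Then $f^\ast\gr 0$ on $[0,\infty)$ and $f^\ast\ls 0$ on $(-\infty,0]$, so the weight on the right-hand side is simply $\int_0^\infty f^\ast(x)\,d\gamma(x)$, and the whole statement is now about a single convex non-decreasing function on $\mathbb R$ vanishing at $0$.

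Next comes the only substantive estimate. Let $a\gr 0$ be a subgradient of $f^\ast$ at $0$ (nonnegative because $f^\ast$ is non-decreasing). If $a=0$, the supporting-line inequality reads $f^\ast\gr 0$ everywhere, so $\{f^\ast<-s\}=\emptyset$ and there is nothing to prove; assume $a>0$. Convexity gives $f^\ast(x)\gr ax$ for all $x\in\mathbb R$. Applied on $(0,\infty)$ it yields
\[
\|(f^\ast)_+\|_{L_1(\gamma)} \;=\; \int_0^\infty f^\ast(x)\,d\gamma(x)\;\gr\; a\int_0^\infty x\,d\gamma(x)\;=\;\frac{a}{\sqrt{2\pi}},
\]
i.e.\ $a\ls\sqrt{2\pi}\,\|(f^\ast)_+\|_{L_1(\gamma)}$. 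Applied on $(-\infty,0)$ it gives, for $s>0$,
\[
\{f^\ast<-s\}\subseteq\{x:ax<-s\}=\Bigl(-\infty,-\tfrac{s}{a}\Bigr),
\]
so that $\gamma(\{f^\ast<-s\})\ls\Phi(-s/a)\ls\Phi\bigl(-s/(\sqrt{2\pi}\,\|(f^\ast)_+\|_{L_1(\gamma)})\bigr)$. Choosing $s=t\,\|(f^\ast)_+\|_{L_1(\gamma)}$ produces exactly $\Phi(-t/\sqrt{2\pi})$. Note that the sharp constant $1/\sqrt{2\pi}$ is generated entirely by the identity $\int_0^\infty x\,d\gamma(x)=1/\sqrt{2\pi}$ (equivalently $\mathbb E\,\zeta_+=1/\sqrt{2\pi}$).

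For the equality case, for an affine map $f(x)=\langle x,u\rangle+v$ with $u\neq 0$ the rearrangement is the affine function $f^\ast(x)=\|u\|_2\,x+v$; the supporting line at $0$ coincides with $f^\ast$ itself, and $f(Z)-{\rm med}(f)$ has the law of $\|u\|_2\,\zeta$. Tracing through the argument, every inequality above becomes an equality, so the bound is attained. As for difficulties, I do not expect a real obstacle here: the substance is the one-line subgradient inequality, and the only points requiring a little care are the bookkeeping around non-uniqueness of the median, the equi-measurability identities from Lemma~\ref{lem:basic-gauss-rearr}(c), and the degenerate case in which ${\rm med}(f)$ equals the essential infimum of $f$ (covered by $a=0$ above) — all routine.
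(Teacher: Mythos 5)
Your proof is correct and takes essentially the same route as the paper: pass to the one-dimensional Gaussian rearrangement $f^\ast$ (convex, non-decreasing, equi-measurable with $f$), exploit the supporting line of $f^\ast$ at $0$ both to control the left tail $\gamma(\{f^\ast<-s\})$ and to lower-bound $\|(f^\ast)_+\|_{L_1(\gamma)}$ via the identity $\int_0^\infty x\,d\gamma = 1/\sqrt{2\pi}$ (which is Lemma~\ref{lem:key}(1) at $p=1$). The only cosmetic difference is that you carry a single subgradient $a$ throughout, while the paper's write-up uses $(f^\ast)'(0-)$ for the tail and $(f^\ast)'(0+)$ for the $L_1$ bound and then implicitly invokes $(f^\ast)'(0-)\ls (f^\ast)'(0+)$; your version streamlines that bookkeeping, handles the degenerate case $a=0$ explicitly, and additionally verifies the equality case for affine maps, which the paper states but does not check.
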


\noindent {\it Proof.} We may assume without loss of generality that $\|(f-{\rm med}( f))_+\|_1>0$. 
We introduce the function $f^\ast$. Note that $f^\ast(0)={\rm med}(f^\ast) ={\rm med}(f) =m$ and 
$\| (f^\ast -m)_+ \|_{L_1(\gamma)}= \| (f-m)_+\|_{L_1(\gamma_n)}$. Thus, for $u>0$ we may write:
\begin{align*}
\gamma_n(x \in \mathbb R^n : f(x)-m \ls -u) =\gamma(s \in \mathbb R : f^\ast(s) -f^\ast(0) \ls -u) 
\ls \gamma (s : (f^\ast)'(0-) \cdot s \ls -u),
\end{align*} where we have used the facts that $f^\ast$ and $f$ are equi-measurable and $f^\ast(s)\gr f^\ast(0)+(f^\ast)'(0-) \cdot s$ 
for all $s <0 $, since $f^\ast$ is convex. Lemma \ref{lem:key} shows that:
\begin{align} \label{eq:L-1-der}
\|(f^\ast - m)_+\|_{L_1(\gamma)} \gr \frac{(f^\ast)'(0+)}{\sqrt{2\pi}}.
\end{align} Thus, for $u=t\sqrt{2\pi } \|f-m\|_{L_1(\gamma_n)}$ ($t>0$ fixed) we may write:
\begin{align*}
\gamma_n \left( x: f(x) -m \ls -t \sqrt{2\pi} \|(f-m)_+\|_{L_1} \right) 
\ls \gamma \left( s: (f^\ast)'(0-) \cdot s \ls -t \sqrt{2\pi} \| (f^\ast - m)_+\|_{L_1} \right)  \ls \Phi(-t),
\end{align*} where in the last step we have used \eqref{eq:L-1-der}. \prend

\smallskip

It is easy to check that the convexity assumption in the above theorem is essential. Consider the following:

\begin{example}[The convexity cannot be omitted]
Consider the sequence of functions $g_k(t)=t^{2k+1}, \; k=1,2,\ldots$ and note that ${\rm med}(g_k)=g_k(0)=0$. 
Also, $\mathbb E (g_k-{\rm med}(g_k))_+ = \mathbb E (g_k)_+= \frac{2^k k!}{\sqrt{2\pi}}$. Thus,
\begin{align*}
\mathbb P \left( g_k(\zeta) <-t \sqrt{2\pi} \mathbb E(g_k(\zeta))_+ \right) = \Phi \left( - (t2^k k!)^{\frac{1}{2k+1}} \right).
\end{align*} The latter is smaller than $\Phi(-t)$ only when $t < \sqrt{ 2 (k!)^{1/k} } \ls \sqrt{2k}$. 
\end{example}


\subsubsection {Inequalities for the $\chi^{2}$ distribution.} 
Here we present one more application of Ehrhard's inequality. We show that the property that the 
mapping $t\mapsto \Phi^{-1}\circ \gamma_n(f\ls t)$ is concave
is shared by other significant distributions at the cost of fairly restricting the class of convex functions. Namely, 
we have the following:

\begin{proposition} \label{prop:Ehr-chi}
Let $k\in \mathbb N, k\gr 2$ and let $W=(w_1, \ldots, w_n)$ be a  random vector with independent coordinates such 
that $w_j\sim \chi^2(k)$. 
For any function $f:\mathbb R_+^n\to \mathbb R$ which is coordinatewise non-decreasing and convex, the mapping
\begin{align*}
t\mapsto \Phi^{-1} \circ \mathbb P( f(W) \ls t),
\end{align*} is concave.
\end{proposition}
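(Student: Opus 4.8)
The plan is to mimic the structure of Ehrhard's theorem and its rearrangement machinery, replacing the Gaussian measure $\gamma_n$ with the product $\chi^2(k)$ law, and exploiting the fact that a $\chi^2(k)$ variable with $k\gr 2$ can be written as a convex, coordinatewise-monotone pushforward of a standard Gaussian vector of larger dimension. More precisely, first I would note that if $Z=(Z_1,\dots,Z_k)\sim N(\mathbf 0,I_k)$ then $|Z|^2=\sum_{i\ls k}Z_i^2\sim\chi^2(k)$, and more to the point the square-root map $z\mapsto \|z\|_2$ is convex and $1$-Lipschitz; however we need the $\chi^2$ itself, not its square root, so instead I would realize $w_j\sim\chi^2(k)$ as $w_j = \|Y^{(j)}\|_2^2$ for independent blocks $Y^{(j)}\in\mathbb R^k$ of i.i.d.\ standard Gaussians, giving a total Gaussian vector $Y\in\mathbb R^{nk}$. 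Then $W = \Psi(Y)$ where $\Psi(y)=(\|y^{(1)}\|_2^2,\dots,\|y^{(n)}\|_2^2)$, and crucially each coordinate $y\mapsto\|y^{(j)}\|_2^2$ is convex. The composition $f\circ\Psi:\mathbb R^{nk}\to\mathbb R$ is then convex, because $f$ is convex and coordinatewise non-decreasing and each component of $\Psi$ is convex (this is the standard fact that a non-decreasing convex function of convex functions is convex).

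Second, once $g:=f\circ\Psi$ is known to be convex on $\mathbb R^{nk}$, I would apply Ehrhard's inequality (Theorem \ref{thm:Ehr-ineq}), exactly as in the proof of Lemma \ref{lem:basic-gauss-rearr}(a): applying Ehrhard with $A=B=\{g\ls t\}$ (these are convex sublevel sets of a convex function) shows that $t\mapsto\Phi^{-1}\circ\gamma_{nk}(g\ls t)$ is concave. Finally I would translate back: since $W=\Psi(Y)$ with $Y\sim N(\mathbf 0,I_{nk})$, we have
\begin{align*}
\mathbb P(f(W)\ls t)=\mathbb P(f(\Psi(Y))\ls t)=\gamma_{nk}(g\ls t),
\end{align*}
so $t\mapsto\Phi^{-1}\circ\mathbb P(f(W)\ls t)$ is precisely the concave function just obtained, and we are done.

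The step I expect to require the most care is the first one: justifying that $g=f\circ\Psi$ is genuinely convex. The subtlety is that $f$ is only assumed defined and convex on $\mathbb R_+^n$ (the natural domain, since $\chi^2$ variables are nonnegative), so one must check that the composition argument — a coordinatewise non-decreasing convex $f$ precomposed with componentwise convex maps taking values in $\mathbb R_+^n$ — yields a convex function on all of $\mathbb R^{nk}$; this is true but one should state the monotone-convex composition lemma cleanly, perhaps extending $f$ to a convex non-decreasing function on all of $\mathbb R^n$ first. A second minor point is that the sublevel sets $\{g\ls t\}$ must be convex to apply Ehrhard, which is immediate from convexity of $g$, and one should note they may be empty or all of $\mathbb R^{nk}$ for extreme $t$, in which case the concavity statement is vacuous or trivial at those points. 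Everything else is a direct transcription of the Gaussian-rearrangement argument already carried out in the excerpt.
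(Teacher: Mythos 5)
Your proof is correct and follows essentially the same route as the paper: the paper directly defines $F(x_{11},\ldots,x_{kn})=f\bigl(\sum_{i\ls k}x_{i1}^2,\ldots,\sum_{i\ls k}x_{in}^2\bigr)$, observes that $F$ is convex, applies Ehrhard's inequality, and uses that the squared Gaussian blocks are independent $\chi^2(k)$ variables, which is exactly your $g=f\circ\Psi$ construction. Your added elaboration on why the monotone-convex composition preserves convexity is a welcome clarification of the paper's terse ``we may check that $F$ is convex,'' and the preliminary remark about the square-root map is a harmless detour.
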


\noindent {\it Proof.} Let $F:\mathbb R^{kn}\to \mathbb R$ be the function defined by:
\begin{align*}
F(x_{11},\ldots,x_{1n}, \ldots, x_{k1}, \ldots, x_{kn} ) = f\left( \sum_{i=1}^k x_{i1}^2, \ldots, \sum_{i=1}^k x_{in}^2 \right).
\end{align*} We may check that $F$ is convex. Hence, if $(\zeta_{ij})_{i,j=1}^{k,n}$ are independent with $\zeta_{ij}\sim N(0,1)$,
by Ehrhard's inequality we have that the mapping $t\mapsto \Phi^{-1}\circ \mathbb P( F(Z) \ls t)$ is concave, where 
$Z=(\zeta_{11}, \ldots, \zeta_{1n}, \ldots, \zeta_{k1}, \ldots, \zeta_{kn}) \sim N({\bf 0}, I_{kn})$. Finally, the observation that
for $j=1,2, \ldots,n$ the random variables $\sum_{i=1}^k\zeta_{ij}^2$ are independent and $\chi^2 (k)$, yields that
\begin{align*}
\mathbb P(F(Z) \ls t) = \mathbb P( f(W) \ls t ),
\end{align*} for all $t\in \mathbb R$. \prend

\medskip

As an immediate consequence of Proposition \ref{prop:Ehr-chi} we have the following:

\begin{corollary}
Let $f:\mathbb R^n\to \mathbb R$ be 1-unconditional and convex function. Then, the mapping
\begin{align*}
t\mapsto \Phi^{-1} \circ \nu_1^n (f\ls t),
\end{align*} is concave.
\end{corollary}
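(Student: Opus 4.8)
\textit{Proof proposal.} The plan is to realize the $1$-unconditional convex function $f$ as a function of the squared coordinates, and then apply Proposition \ref{prop:Ehr-chi} with $k=1$. Recall that $\nu_1^n$ denotes the $n$-fold product of the distribution of a $\chi^2(1)$-type coordinate; more precisely, since $f$ is $1$-unconditional, its value depends only on $(|x_1|,\ldots,|x_n|)$, and we may write $f(x) = h(x_1^2,\ldots,x_n^2)$ for a suitable function $h$ on $\mathbb R_+^n$. The first step is therefore to produce this $h$ explicitly: set $h(u_1,\ldots,u_n) := f(\sqrt{u_1},\ldots,\sqrt{u_n})$ for $u_j \gr 0$. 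Then $\Phi^{-1}\circ \nu_1^n(f\ls t) = \Phi^{-1}\circ \mathbb P(h(W)\ls t)$ where $W=(w_1,\ldots,w_n)$ has independent $\chi^2(1)$ coordinates (since $w_j = \zeta_j^2$ with $\zeta_j\sim N(0,1)$, and $|\zeta_j|$ is distributed according to the one-dimensional factor of $\nu_1^n$).

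The main point to verify is that $h$ satisfies the hypotheses of Proposition \ref{prop:Ehr-chi}: namely, that $h$ is coordinatewise non-decreasing and convex on $\mathbb R_+^n$. This is precisely the content (for $k=1$) of the computation inside the proof of Proposition \ref{prop:Ehr-chi}, where it is checked that $F(x) = f(\ldots,\sum_i x_{ij}^2,\ldots)$ is convex on $\mathbb R^{kn}$ whenever $f$ is coordinatewise non-decreasing and convex. I would instead argue in the reverse direction: the map $(x_1,\ldots,x_n)\mapsto h(x_1^2,\ldots,x_n^2) = f(|x_1|,\ldots,|x_n|) = f(x)$ is convex on $\mathbb R^n$ by $1$-unconditionality of the convex function $f$ (a $1$-unconditional convex function equals its own ``absolute-value composition''), and one deduces coordinatewise monotonicity of $h$ from $1$-unconditionality and convexity of $f$ (a convex $1$-unconditional function is non-decreasing in each $|x_j|$, hence in each $u_j = x_j^2$). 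Granting coordinatewise monotonicity and convexity of $h$ on $\mathbb R_+^n$, Proposition \ref{prop:Ehr-chi} with $k=1$ applies directly.

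The hard part is the bookkeeping around the degenerate case $k=1$: Proposition \ref{prop:Ehr-chi} is stated for $k\gr 2$, so strictly speaking one must either extend it to $k=1$ or rerun its proof. But the proof of Proposition \ref{prop:Ehr-chi} goes through verbatim for $k=1$: one forms $F(x_1,\ldots,x_n) = h(x_1^2,\ldots,x_n^2)$ on $\mathbb R^n$, checks it is convex (which, as noted, is just the statement that $f$ is convex and $1$-unconditional), applies Ehrhard's inequality (Theorem \ref{thm:Ehr-ineq}) to conclude $t\mapsto \Phi^{-1}\circ\mathbb P(F(Z)\ls t)$ is concave for $Z\sim N(\mathbf 0,I_n)$, and finally observes $\mathbb P(F(Z)\ls t) = \mathbb P(h(Z_1^2,\ldots,Z_n^2)\ls t) = \nu_1^n(f\ls t)$ since the coordinates $\zeta_j^2$ are independent $\chi^2(1)$ and $f = h(\,\cdot^2)$ up to the $1$-unconditional symmetry. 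The only genuine subtlety is making sure that ``$f$ $1$-unconditional and convex'' really does force $h$ to be coordinatewise non-decreasing on $\mathbb R_+^n$; this I would record as a one-line lemma (for a convex $1$-unconditional $\phi$ on $\mathbb R$, $\phi$ is non-decreasing on $[0,\infty)$ — immediate from $\phi(0)\ls \phi(x)$ and convexity — and the multivariate version follows by freezing all but one coordinate).
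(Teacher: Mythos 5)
There is a genuine gap, and it traces to a misidentification of the measure $\nu_1^n$. In this paper (following the standard $\nu_p$ convention for measures with density proportional to $e^{-|t|^p}$), $\nu_1$ is the two-sided exponential (Laplace) measure with density $\tfrac{1}{2}e^{-|t|}$ on $\mathbb R$, so $\nu_1^n$ is its $n$-fold product. It is \emph{not} the law of $\zeta^2$ for $\zeta\sim N(0,1)$, nor the half-normal law of $|\zeta|$. Under your interpretation, the equality $\mathbb P(F(Z)\ls t)=\nu_1^n(f\ls t)$ in your last step is false. Indeed, with $h(u)=f(\sqrt{u_1},\ldots,\sqrt{u_n})$ and $F(x)=h(x_1^2,\ldots,x_n^2)=f(|x_1|,\ldots,|x_n|)=f(x)$, you have $\mathbb P(F(Z)\ls t)=\gamma_n(f\ls t)$. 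Thus the concavity you obtain from Ehrhard is that of $t\mapsto\Phi^{-1}\circ\gamma_n(f\ls t)$, which is exactly Lemma \ref{lem:basic-gauss-rearr}(a) for any convex $f$ and never uses the unconditionality at all; it does not say anything about the exponential measure. (A smaller slip, which happens not to break your argument: convexity of $F(x)=h(x_1^2,\ldots,x_n^2)$ does not imply convexity of $h$ — take $f(x)=|x_1|$, so $h(u)=\sqrt{u_1}$ is concave — so one cannot ``argue in the reverse direction''; but since the proof of Proposition \ref{prop:Ehr-chi} only actually needs $F$ convex, this does not block you.)

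The intended deduction uses $k=2$, not $k=1$, because the exponential distribution \emph{is} $\chi^2(2)$ up to scaling. Concretely: let $g:=f|_{\mathbb R_+^n}$; by $1$-unconditionality and convexity, $g$ is convex and coordinatewise non-decreasing on $\mathbb R_+^n$. If $E=(E_1,\ldots,E_n)\sim\nu_1^n$ and $W=(W_1,\ldots,W_n)$ has independent $\chi^2(2)$ coordinates, then $|E_j|\stackrel{d}{=}W_j/2$ (both have density $e^{-t}\mathbf 1_{t>0}$), hence
\begin{align*}
\nu_1^n(f\ls t)=\mathbb P\bigl(g(|E_1|,\ldots,|E_n|)\ls t\bigr)=\mathbb P\bigl(\widetilde g(W)\ls t\bigr),\qquad \widetilde g(u):=g(u/2),
\end{align*}
and $\widetilde g$ is still convex and coordinatewise non-decreasing. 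Proposition \ref{prop:Ehr-chi} applied with $k=2$ to $\widetilde g$ gives precisely the claimed concavity. This is the ``immediate consequence'' the paper has in mind; there is no need to push $k$ down to $1$, and doing so only reproduces the Gaussian statement.
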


\section{An application to finite-dimensional normed spaces}

The purpose of this section is to provide an application of the tightness result on the concentration proved in Theorem \ref{thm:main-2}
in the context of norms. Namely, we show that for any given norm on $\mathbb R^n$ there exists a $5$-equivalent norm say, 
which exhibits optimal Gaussian concentration in terms of its Lipschitz constant. In turn this implies an instability result for the dependence
on $\varepsilon$ in the almost isometric version of the randomized Dvoretzky's theorem. 
In order to give the precise statements we have to recall some definitions.

Let $\|\cdot \|$ be an arbitrary norm on $\mathbb R^n$ and let $X=(\mathbb R^n, \|\cdot\|)$.  
We define the global parameters of $X$:
\begin{align*}
b(X) :=\max\{ \|\theta\| : \|\theta\|_2=1 \}, \quad   k(X) := \left( \mathbb E\|Z\| /b(X) \right)^2, \quad Z\sim N({\bf 0},I_n).
\end{align*} The parameter $k(X)$ is usually referred to as the {\it critical dimension} of the normed space $X$. 

First we show the following instability result for the concentration of norms:

\begin{theorem} \label{thm:main-3-1}
There exists an universal constant $C\gr 1$ with the following property: for any $n\gr 1$ and for any norm $\|\cdot\|$ on $\mathbb R^n$, 
there exists a $5$-equivalent norm $\vertiii{ \cdot }$ such that
\begin{align*}
\mathbb P \left( \big| \vertiii{Z}-\mathbb E \vertiii{ Z} \big| > \varepsilon \right) \gr \frac{1}{C} \exp(-C\varepsilon^2/ b(Y)^2 ), \quad Z\sim N({\bf 0},I_n),
\end{align*} for all $\varepsilon>0$, where $Y=(\mathbb R^n, \vertiii{ \cdot })$.
\end{theorem}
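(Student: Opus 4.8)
The plan is to build $\vertiii{\cdot}$ so that it has a large variance relative to its Lipschitz constant, and then invoke Theorem \ref{thm:main-2} (equivalently Corollary \ref{cor:LMS-cvx}). The Lipschitz constant of a norm $\|\cdot\|$ with respect to $\|\cdot\|_2$ is exactly $b(X)$, so what we need is a $5$-equivalent norm $\vertiii{\cdot}$ with ${\rm Var}(\vertiii{Z}) \gr c\, b(Y)^2$, where $Y=(\mathbb R^n,\vertiii{\cdot})$. The natural candidate is to combine the original norm with a well-chosen linear functional: set
\begin{align*}
\vertiii{x} := \|x\| + \lambda\, b(X)\, |\langle x, \theta_0\rangle|,
\end{align*}
where $\theta_0$ is a unit vector achieving $b(X)$ (so $\|\theta_0\|=b(X)$, $\|\theta_0\|_2=1$) and $\lambda>0$ is a small absolute constant to be fixed. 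Since $|\langle x,\theta_0\rangle|\ls \|x\|_2 \ls b(X)^{-1}\|x\|$ is not quite what we want, we instead compare directly: $0\ls \lambda b(X)|\langle x,\theta_0\rangle| \ls \lambda b(X)\|x\|_2$, and from $\|x\|\gr$ (something) we'd like two-sided control. Actually the clean comparison is with $\|\cdot\|$ itself only if $\|\cdot\|\gtrsim b(X)\|\cdot\|_2$ in the direction $\theta_0$, which need not hold; so the robust choice is
\begin{align*}
\vertiii{x} := \max\{\|x\|,\ \lambda\, b(X)\, |\langle x,\theta_0\rangle|\} \quad\text{or}\quad \vertiii{x} := \|x\| + \lambda\, b(X)\,|\langle x,\theta_0\rangle|,
\end{align*}
and in either case $\|x\|\ls \vertiii{x}\ls (1+\lambda)\|x\|$ is false in general, so one must argue $5$-equivalence via the Euclidean ball: $\|x\|\ls b(X)\|x\|_2$ and, after choosing $\lambda$ appropriately, $\vertiii{x}\ls 5\|x\|$ can fail — hence the correct route is to first replace $\|\cdot\|$ by $\|x\|':=\max\{\|x\|, b(X)\|x\|_2/2\}$-type regularizations. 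I will pick whichever bounded perturbation makes both $5$-equivalence and the variance lower bound transparent; the additive perturbation $\vertiii{x}=\|x\|+\lambda b(X)|\langle x,\theta_0\rangle|$ together with $\lambda$ small enough that $(1+\lambda \cdot 1)\ls 5$-type bookkeeping works is the intended one, using $|\langle x,\theta_0\rangle|\ls \|x\|_2$ and a lower bound $\|x\|\gtrsim$ something in codimension directions.

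Granting $5$-equivalence, the core step is the variance lower bound. Write $\vertiii{Z} = \|Z\| + \lambda b(X)|\langle Z,\theta_0\rangle|$. Condition on the orthogonal complement of $\theta_0$: given the projection of $Z$ onto $\theta_0^\perp$, the quantity $g:=\langle Z,\theta_0\rangle$ is a standard Gaussian independent of that projection, and $x\mapsto \vertiii{x}$ restricted to the line $\{y + g\theta_0\}$ is convex in $g$ with derivative bounded below (in absolute value, for $g$ away from $0$) by $\lambda b(X) - b(X) = (\lambda-1)b(X)$ — that's negative, so I instead use that the \emph{map} $g\mapsto \vertiii{y+g\theta_0}$ has a convex profile whose one-sided derivatives at $\pm\infty$ differ by at least $2\lambda b(X)$ (the $\|\cdot\|$ part contributes a slope in $[-b(X),b(X)]$ by $1$-Lipschitzness of $\|\cdot\|/b(X)$ along $\theta_0$, while the $|\langle\cdot,\theta_0\rangle|$ part contributes $\pm\lambda b(X)$). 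Then Lemma \ref{lem:key}(1), applied to the convex nondecreasing function obtained by restricting to $g\gr$ (its minimizer), gives a lower bound on the conditional second moment of the fluctuation of $\vertiii{Z}$ of order $(\lambda b(X))^2$; integrating over the conditioning and using ${\rm Var}(\vertiii{Z})\gr \mathbb E_{\theta_0^\perp}[{\rm Var}(\vertiii{Z}\mid \theta_0^\perp)]$ yields ${\rm Var}(\vertiii{Z})\gr c\lambda^2 b(X)^2$. Since $b(Y)\simeq b(X)$ by $5$-equivalence, this is ${\rm Var}(\vertiii{Z})\gr c' b(Y)^2$, i.e. ${\bf ov}(\vertiii{\cdot})\gr c''$.

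With ${\bf ov}(\vertiii{\cdot})$ bounded below by an absolute constant, Theorem \ref{thm:main-2} (the bound \eqref{eq:rev-conc-lip}) gives $\mathbb P(|\vertiii{Z}-{\rm med}(\vertiii{Z})|\gr tL)\gr c e^{-Ct^2}$ with $L={\rm Lip}(\vertiii{\cdot})=b(Y)$; rewriting $t = \varepsilon/b(Y)$ gives the stated inequality with the median, and Remark following the corollary (item 1, the moment comparison between median and mean) lets us replace the median by the mean at the cost of absolute constants absorbed into $C$. The main obstacle I anticipate is the bookkeeping for $5$-equivalence: the perturbation must be large enough (in the $\theta_0$-direction, at the top of the ``spectrum'' of the norm) to force a macroscopic variance, yet small enough globally to stay within a factor $5$; getting a single absolute $\lambda$ that does both, uniformly over all norms and all $n$, is the delicate point, and it hinges on the fact that $b(X)$ is attained and that perturbing only in one rank-one direction costs at most an additive $\lambda b(X)\|x\|_2 \ls \lambda b(X)\cdot(\text{controlled multiple of }\|x\|)$ — which is exactly where one needs to be careful, possibly by first symmetrizing/regularizing $\|\cdot\|$ so that $\|x\|\gr \tfrac12 b(X)|\langle x,\theta_0\rangle|$ holds, making the whole argument clean.
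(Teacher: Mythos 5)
Your construction and the paper's are actually the same tilt: if $\theta_0\in S^{n-1}$ achieves $\|\theta_0\|=b(X)$, then $x_0^\ast:=b(X)\theta_0$ is precisely the paper's $x_0^\ast\in S_{X^\ast}$ with $\|x_0^\ast\|_2=b(X)$, so your $\vertiii{x}=\|x\|+\lambda b(X)|\langle x,\theta_0\rangle|$ equals the paper's $f_\lambda(x)=\|x\|+\lambda|\langle x,x_0^\ast\rangle|$. The genuine gap is your claim that $\|x\|\ls\vertiii{x}\ls(1+\lambda)\|x\|$ ``is false in general,'' which leads you into unnecessary regularizations. In fact the upper bound always holds, because $b(X)|\langle x,\theta_0\rangle|\ls\|x\|$ for every $x$: the vector $b(X)\theta_0$ has dual norm exactly $1$. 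One way to see this: the inscribed Euclidean ball $b(X)^{-1}B_2^n\subseteq B_X$ touches $\partial B_X$ at $\theta_0/b(X)$, so any hyperplane supporting $B_X$ there must also support the smooth inscribed ball and is therefore $\{y:\langle y,\theta_0\rangle=1/b(X)\}$; hence $\sup_{\|y\|\ls1}\langle y,\theta_0\rangle=1/b(X)$ and $\|b(X)\theta_0\|_\ast=1$. (Equivalently, if $\langle\theta_0,x_0^\ast\rangle=b(X)$ with $\|x_0^\ast\|_\ast=1$, then $b(X)\ls\|x_0^\ast\|_2\ls b(X)$ forces equality in Cauchy--Schwarz, giving $x_0^\ast=b(X)\theta_0$.) With this, $\lambda=4$ gives $5$-equivalence outright; the $\max\{\|x\|,b(X)\|x\|_2/2\}$-type replacements you float would distort the norm by an uncontrolled factor, since $b(X)\|x\|_2$ can be much larger than $\|x\|$, and are not needed.

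Your variance lower bound by conditioning on $\theta_0^\perp$ and invoking Lemma \ref{lem:key}(1) is plausible but delicate (the conditional profile $g\mapsto\vertiii{y+g\theta_0}$ has an interior minimizer, not a monotone tail, so you would need extra bookkeeping with the one-sided slopes at that minimizer and the Gaussian mass around it). The paper's route is a one-liner: the reverse triangle inequality for standard deviations gives
\begin{align*}
\sqrt{{\rm Var}[\vertiii{Z}]}\ \gr\ \lambda b(X)\sqrt{{\rm Var}|\zeta|}\ -\ \sqrt{{\rm Var}\|Z\|}\ \gr\ \bigl(\lambda\sqrt{1-2/\pi}-1\bigr)\,b(X),
\end{align*}
which at $\lambda=4$ exceeds $\tfrac{1}{8}(1+\lambda)b(X)=\tfrac{1}{8}{\rm Lip}(\vertiii{\cdot})$, i.e. ${\bf ov}(\vertiii{\cdot})\gr 1/8$. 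The final step — feeding this into Theorem \ref{thm:main-2} and transferring from median to mean — matches the paper.
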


At this point we should mention that, although for any given norm there exists a 5-equivalent norm 
for which the classical concentration is optimal, there are several examples
(established recently) which show that choosing appropriately the position of the norm (via a linear map) 
one can exhibit better tail estimates; see \cite{PV-Lp} and \cite{Tik-unc}.

\subsection{Tilted norms and instability of the concentration.} 

Given any norm $\|\cdot\|$ on $\mathbb R^n$, let $x_0^\ast \in S_{X^\ast}$ such that $\|x_0^\ast\|_2=b(X)$, 
where $X=(\mathbb R^n, \|\cdot\|)$. For any $t>0$ we define the norm
\begin{align} \label{eq:norm-tilted}
f_t(x) = \|x\| +t |\langle x, x_0^\ast \rangle|, \quad x\in \mathbb R^n.
\end{align} Let also $X_t = (\mathbb R^n, f_t)$ be the induced normed space. In the next easily verified lemma we collect
some of the basic properties of the norms $f_t$:

\begin{lemma} \label{lem:proper-f-t}
Let $X=(\mathbb R^n, \|\cdot\|)$ be a normed space and let $(f_t)_{t>0}$ be the family of norms defined above. 
\begin{itemize}

\item [a.] For all $x\in \mathbb R^n$ and $t>0$ we have: $\|x\|\ls f_t(x) \ls (1+t) \|x\|$.

\item [b.] For all $t>0$ we have: $b_t\equiv \sup\{ f_t(\theta) : \|\theta\|_2=1 \} =(1+t)b={\rm Lip}(f_t)$ and $k_t\equiv k(X_t) = (1+t)^{-2} (\sqrt{k(X)} + t \sqrt{2/\pi})^2$.

\item [c.]  For $t\gr 4$, we have
$\sqrt{ {\rm Var}[f_t(Z)] } \gr \frac{1}{8} {\rm Lip}(f_t) $.

\end{itemize} 
\end{lemma}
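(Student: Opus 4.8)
The plan is to check (a), (b), (c) of Lemma~\ref{lem:proper-f-t} in turn; all three are short computations once one records the duality fact behind the choice of $x_0^\ast$. For (a), the bound $\|x\|\leq f_t(x)$ is immediate since $t|\langle x,x_0^\ast\rangle|\geq 0$, while $f_t(x)\leq(1+t)\|x\|$ follows from $|\langle x,x_0^\ast\rangle|\leq\|x_0^\ast\|_{X^\ast}\|x\|=\|x\|$, because $x_0^\ast\in S_{X^\ast}$.

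For (b) I would first recall that any norm $N$ on $\mathbb R^n$ has ${\rm Lip}(N)=\sup\{N(\theta):\|\theta\|_2=1\}$, so it suffices to compute $b_t$. On the Euclidean sphere, $\|\theta\|\leq b$ and $|\langle\theta,x_0^\ast\rangle|\leq\|x_0^\ast\|_2=b$, which gives $b_t\leq(1+t)b$. For the matching lower bound the key point is the identity $\|x_0^\ast\|_X=b^2$: on one hand $\|x_0^\ast\|_X\leq b\|x_0^\ast\|_2=b^2$, and on the other, testing the biduality $\|y\|_X=\sup\{\langle y,z\rangle:\|z\|_{X^\ast}\leq 1\}$ with $y=z=x_0^\ast$ (legitimate since $\|x_0^\ast\|_{X^\ast}=1$) gives $\|x_0^\ast\|_X\geq\langle x_0^\ast,x_0^\ast\rangle=\|x_0^\ast\|_2^2=b^2$. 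Hence $\theta_0:=x_0^\ast/b$ satisfies $\|\theta_0\|_2=1$, $\|\theta_0\|_X=b$, and $\langle\theta_0,x_0^\ast\rangle=b$, so $f_t(\theta_0)=(1+t)b$ and therefore $b_t=(1+t)b={\rm Lip}(f_t)$. For the critical dimension I would then compute $\mathbb E f_t(Z)=\mathbb E\|Z\|+t\,\mathbb E|\langle Z,x_0^\ast\rangle|$; since $\langle Z,x_0^\ast\rangle\sim N(0,b^2)$ and $\mathbb E\|Z\|=\sqrt{k(X)}\,b$, this equals $b\bigl(\sqrt{k(X)}+t\sqrt{2/\pi}\bigr)$, and dividing by $b_t=(1+t)b$ and squaring yields the stated formula for $k_t$.

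For (c) I would not expand ${\rm Var}[f_t(Z)]$ directly, but instead use the reverse triangle inequality in $L^2$ for centered random variables: writing $f_t(Z)=\|Z\|+t|\langle Z,x_0^\ast\rangle|$,
\[
\sqrt{{\rm Var}[f_t(Z)]}\ \geq\ t\,\sqrt{{\rm Var}\bigl[\,|\langle Z,x_0^\ast\rangle|\,\bigr]}\ -\ \sqrt{{\rm Var}[\|Z\|]}\,.
\]
The first term equals exactly $tb\sqrt{1-2/\pi}$, because $\langle Z,x_0^\ast\rangle\sim N(0,b^2)$, and the Gaussian Poincar\'e inequality bounds the second term by ${\rm Lip}(\|\cdot\|)=b$. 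Thus $\sqrt{{\rm Var}[f_t(Z)]}\geq b\bigl(t\sqrt{1-2/\pi}-1\bigr)$, and since $8\sqrt{1-2/\pi}-1>3$ the inequality $t\sqrt{1-2/\pi}-1\geq(1+t)/8$ holds for every $t\geq 4$ (indeed for $t\geq 3$). Combined with ${\rm Lip}(f_t)=(1+t)b$ from (b), this gives $\sqrt{{\rm Var}[f_t(Z)]}\geq\tfrac18{\rm Lip}(f_t)$.

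The only step here that is more than bookkeeping is the identity $\|x_0^\ast\|_X=b^2$ in (b): this is precisely what forces $b_t=(1+t)b$ rather than merely $b_t\leq(1+t)b$, and hence pins down ${\rm Lip}(f_t)$ exactly. Everything else reduces to a one-line Gaussian moment computation, the elementary $L^2$ triangle inequality, and the Gaussian Poincar\'e bound ${\rm Var}[\|Z\|]\leq{\rm Lip}(\|\cdot\|)^2$.
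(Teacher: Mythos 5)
Your proof is correct and follows essentially the same route as the paper: the same Cauchy--Schwarz/duality identification of $\theta_0=x_0^\ast/b$ as the extremal direction in (b), the same moment computation for $k_t$, and the same $L^2$ reverse triangle inequality plus Gaussian Poincar\'e bound for (c). The only difference is that you make explicit two points the paper leaves implicit, namely why $\|\theta_0\|_X=b$ (via $\|x_0^\ast\|_X=b^2$) and the exact numerical check that $t\sqrt{1-2/\pi}-1\geq(1+t)/8$ for $t\geq 4$; both are legitimate fillings-in, not a different argument.
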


\noindent {\it Proof.} a. Note that for all $x$, $\|x\| \ls f_t(x) \ls \|x\|+ t \|x\| \cdot \|x_0^\ast \|_\ast \ls (1+t) \|x\|$.

\smallskip

\noindent b. Let $\|\theta_0\|_2=1$ with $\langle x_0^\ast,\theta_0\rangle=b$. Then, $f_t(\theta_0)=b+tb=(1+t)b$. In addition, we have:
\begin{align} \label{eq:exp-f-t}
\mathbb E f_t(Z) = \mathbb E\|Z\| +tb \mathbb E|\zeta| = b\sqrt{k} +tb \sqrt{2/\pi}, \quad \zeta\sim N(0,1).
\end{align}
c. We use the inequality $ \sqrt{ {\rm Var}(\xi_1) } -\sqrt{{\rm Var}(\xi_2)} \ls \sqrt{{\rm Var}(\xi_1+\xi_2)}$ to write:
\begin{align*}
\sqrt{ {\rm Var} [f_t(Z)] } &\gr \sqrt{ {\rm Var}(t |\langle Z,x_0^\ast \rangle|) } - \sqrt{ {\rm Var}\|Z\| }  
= t\|x_0^\ast \|_2 \sqrt{ {\rm Var}|\zeta| } - \sqrt{ {\rm Var}\|Z\|} 
\gr t b\sqrt{ {\rm Var}|\zeta| }  - b .
\end{align*} We choose $t\gr 4>2/\sqrt{ {\rm Var}|\zeta| }$ to conclude.

\prend

\medskip

Theorem \ref{thm:main-3-1} immediately follows from the next result:

\begin{proposition} \label{prop:instable-f-t}
Let $\|\cdot\|$ be a norm on $\mathbb R^n$ and let $t\gr 4$. The tilted norms $(f_t)$ defined in \eqref{eq:norm-tilted} satisfy:
\begin{align*}
\mathbb P ( | f_t(Z)-\mathbb E f_t(Z)| \gr \varepsilon ) \gr c_2e^{-C_2 \varepsilon^2/ b_t^2 },
\end{align*} for all $\varepsilon>0$, where $C_2,c_2>0$ are universal constants.
\end{proposition}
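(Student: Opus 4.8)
The plan is to read off this statement from Theorem~\ref{thm:main-2} (equivalently Theorem~\ref{thm:main}) once we know that the tilted norm $f_t$ is \emph{not} over-concentrated, which is precisely what Lemma~\ref{lem:proper-f-t}(c) supplies. Since $f_t$ is a norm it is convex and $b_t$-Lipschitz with $b_t={\rm Lip}(f_t)$ by Lemma~\ref{lem:proper-f-t}(b), and for $t\gr 4$ Lemma~\ref{lem:proper-f-t}(c) gives $\sqrt{{\rm Var}[f_t(Z)]}\gr \tfrac18 {\rm Lip}(f_t)$, i.e. ${\bf ov}(f_t)\gr \tfrac18$. By \eqref{eq:comp-s-ov} we then also have ${\bf s}(f_t)\gr {\bf ov}(f_t)\gr \tfrac18$, so the parameter $\tau={\bf ov}(f_t){\bf s}(f_t)$ of Theorem~\ref{thm:main-2} satisfies $\tfrac{1}{64}\ls \tau\ls 1$.

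First I would apply \eqref{eq:rev-conc-s} to $f=f_t$ with the deviation parameter taken to be $\varepsilon/b_t$. Because $\tau$ is bounded above and below by universal constants, the prefactor $\tau^4$ and the exponents $\tau^{-2}$ and $\log(e/\tau)$ are all universal, and we obtain
\begin{align*}
\mathbb P\left( \abs{f_t(Z)-{\rm med}(f_t)}\gr \varepsilon \right)\gr c_1 e^{-C_1\varepsilon^2/b_t^2},\quad \varepsilon>0,
\end{align*}
with universal $c_1,C_1>0$ (alternatively, invoke Theorem~\ref{thm:main} with $\alpha=\tfrac18$, which gives the same bound with universal $c(\alpha),C(\alpha)$).

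It remains to pass from the median to the mean. Since $f_t$ is $b_t$-Lipschitz, integrating the classical concentration inequality \eqref{eq:ineq-con} gives $\abs{\mathbb E f_t(Z)-{\rm med}(f_t)}\ls \mathbb E\abs{f_t(Z)-{\rm med}(f_t)}\ls C_0 b_t$ for the universal constant $C_0=\sqrt{\pi/2}$. For $\varepsilon\gr C_0 b_t$ the triangle inequality yields the inclusion $\{\abs{f_t(Z)-{\rm med}(f_t)}\gr 2\varepsilon\}\subseteq \{\abs{f_t(Z)-\mathbb E f_t(Z)}\gr \varepsilon\}$, so the displayed bound transfers (with $C_1$ replaced by $4C_1$) to the mean-centered event. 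For $0<\varepsilon<C_0 b_t$ the probability $\mathbb P(\abs{f_t(Z)-\mathbb E f_t(Z)}\gr \varepsilon)$ is, by monotonicity, at least its value at $\varepsilon=C_0 b_t$, which is a fixed positive universal constant, hence at least $c_2 e^{-C_2\varepsilon^2/b_t^2}$ after shrinking $c_2$; taking the minimum of the constants from the two ranges gives the asserted inequality on all of $(0,\infty)$.

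There is essentially no genuine obstacle here: the proposition is an assembly of Theorem~\ref{thm:main-2} with the explicit variance lower bound of Lemma~\ref{lem:proper-f-t}(c) (whose content is simply $tb\sqrt{{\rm Var}\abs\zeta}-b\gr \tfrac18(1+t)b$ for $t\gr 4$). The only points that require routine care are the median-to-mean passage above and the bookkeeping of universal constants across the two ranges of $\varepsilon$.
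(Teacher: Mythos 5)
Your proof is correct and follows essentially the same route as the paper: Lemma~\ref{lem:proper-f-t}(c) gives ${\bf ov}(f_t)\gr \tfrac18$, and then Theorem~\ref{thm:main-2} (applied via \eqref{eq:rev-conc-lip} in the paper, via \eqref{eq:rev-conc-s} together with \eqref{eq:comp-s-ov} in your write-up, which is the same thing) yields a reverse Gaussian tail with universal constants. The only cosmetic difference is that the paper passes from the median to the mean silently, invoking its earlier Remark~1, whereas you carry out that bookkeeping explicitly; your treatment of the small-$\varepsilon$ range by monotonicity is the standard way to make that remark precise and is fine.
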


\noindent {\it Proof.} From Lemma \ref{lem:proper-f-t}, note that 
$\sqrt{ {\rm Var}[f_t(Z)] } \gr \frac{1}{8} {\rm Lip} (f_t)$ for all $t\gr 4$. Thus, by \eqref{eq:rev-conc-lip} we obtain:
\begin{align*}
\mathbb P \left( \left| f_t(Z) - \mathbb E[f_t(Z)] \right| \gr \varepsilon {\rm Lip}(f_t)  \right) \gr c_2 e^{-C_2 \varepsilon^2},
\end{align*} for all $\varepsilon>0$, as required. \prend

\subsection{Random almost spherical sections of convex bodies} 

The critical dimension $k(X)$ of a normed space was introduced by V. Milman in his work \cite{Mil-dvo} on 
the random version of Dvoretzky's theorem \cite{Dvo}:

\begin{theorem} [Dvoretzky 1961, V. Milman 1971] \label{thm:Dvo} 
For any $\varepsilon\in (0,1)$ there exists $\eta(\varepsilon)>0$ with the following property: 
For any $n\gr 1$ and any norm $\|\cdot\|$ on $\mathbb R^n$, the random $k$-dimensional subspace $F$ 
(with respect to the Haar measure $\nu_{n,k}$ on the Grassmannian $G_{n,k}$) satisfies with high probability
\begin{align}
(1-\varepsilon) M \ls \|y\| \ls (1+\varepsilon) M \|y\|_2 , \quad y\in F,
\end{align} as long as $k\ls \eta(\varepsilon) k(X)$, where $M = \int_{S^{n-1}} \|\theta\|\, d\sigma(\theta)$ and $\sigma$ is the 
uniform probability measure on $S^{n-1}$.
\end{theorem}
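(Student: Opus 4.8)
The plan is to run Milman's classical argument: feed the norm, viewed as a Lipschitz function, into the concentration inequality, and then use a net argument on the unit sphere of a random $k$-dimensional subspace. Write $b=b(X)$; since $\big|\,\|x\|-\|y\|\,\big|\ls\|x-y\|\ls b\|x-y\|_2$, the norm is $b$-Lipschitz with respect to $\|\cdot\|_2$. First I would record the concentration estimate for $\theta\mapsto\|\theta\|$ on $S^{n-1}$: passing from $\gamma_n$ to the uniform measure $\sigma$ on the sphere (or applying \eqref{eq:ineq-con} together with the standard comparison between $\gamma_n$ and $\sigma$), one gets $\sigma(\{\theta:\big|\,\|\theta\|-M\,\big|\gr s\})\ls C\exp(-cns^2/b^2)$ for $s>0$. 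Since $\mathbb E\|Z\|=\mathbb E\|Z\|_2\cdot M$ with $\mathbb E\|Z\|_2\simeq\sqrt n$, we have $k(X)\simeq nM^2/b^2$, so the exponent above is $\simeq-c\,s^2k(X)/M^2$; in particular, with $s=\varepsilon M/2$ the bad set has $\sigma$-measure $\ls C\exp(-c\varepsilon^2 k(X))$.

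Next I would set up the random subspace --- fix a $k$-dimensional $E_0\subseteq\mathbb R^n$, let $U$ be Haar on $O(n)$ and $F=U(E_0)\sim\nu_{n,k}$ --- and carry out a \emph{two-scale} net argument on $S_F:=S^{n-1}\cap F$. At the coarse scale, fix a $\tfrac12$-net $\mathcal N_0$ of $S_{E_0}$ with $|\mathcal N_0|\ls C^k$; then $U(\mathcal N_0)$ is a $\tfrac12$-net of $S_F$, each of its points $Ux$ is uniform on $S^{n-1}$, so by the concentration estimate with $s=M$ and a union bound, off an event of probability $\ls C^k\exp(-ck(X))$ every point of $U(\mathcal N_0)$ has norm $\ls 2M$. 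A one-step bootstrap then upgrades this to $b_F:=\sup\{\|z\|:z\in S_F\}\ls 4M$: for $y\in S_F$ write $y=x+r$ with $x\in U(\mathcal N_0)$ and $\|r\|_2\ls\tfrac12$, so $\|y\|\ls 2M+\tfrac12 b_F$, and taking the supremum over $y\in S_F$ gives $b_F\ls 4M$. This step needs $k\ls c\,k(X)$. At the fine scale, fix a $\delta$-net $\mathcal N_1$ of $S_{E_0}$ with $\delta=\varepsilon/8$ and $|\mathcal N_1|\ls(C/\varepsilon)^k$; by the concentration estimate with $s=\varepsilon M/2$ and a union bound, off an event of probability $\ls(C/\varepsilon)^k\exp(-c\varepsilon^2 k(X))$ every point of $U(\mathcal N_1)$ satisfies $\big|\,\|\cdot\|-M\,\big|\ls\varepsilon M/2$; this needs $k\ls c\varepsilon^2 k(X)/\log(e/\varepsilon)$.

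On the intersection of the two good events --- which has high probability as soon as $k\ls\eta(\varepsilon)k(X)$ with $\eta(\varepsilon):=c\varepsilon^2/\log(e/\varepsilon)$ --- I would finish as follows: given $y\in S_F$, pick $x\in U(\mathcal N_1)$ with $\|y-x\|_2\ls\delta$; then $\big|\,\|y\|-M\,\big|\ls\|y-x\|+\big|\,\|x\|-M\,\big|\ls b_F\,\delta+\varepsilon M/2\ls 4M\cdot\tfrac{\varepsilon}{8}+\varepsilon M/2=\varepsilon M$, which is the asserted two-sided estimate for unit vectors of $F$, and hence for all $y\in F$ by homogeneity.

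The step I expect to be the main obstacle is keeping $\eta(\varepsilon)$ free of any dependence on $n$ and $k(X)$: a naive single net on $S_F$ is forced to have mesh $\simeq\varepsilon M/b$, which inflates the union bound by a factor $(b/\varepsilon M)^k\simeq(n/k(X))^{k/2}\varepsilon^{-k}$ and would spoil $\eta$ with a spurious $\log(n/k(X))$ term. The coarse-scale step circumvents this by first showing that on a random subspace the norm cannot be much larger than $M$ anywhere on $S_F$, i.e. $b_F\simeq M$, so the fine net can be taken with mesh depending only on $\varepsilon$; the remaining bookkeeping (the comparison $k(X)\simeq nM^2/b^2$, the net cardinalities, and the passage between the sphere and the Gaussian) is routine.
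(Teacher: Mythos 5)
The paper states this theorem without proof, citing \cite{Dvo}, \cite{Mil-dvo}, \cite{Go}, \cite{Sch-eps}; so there is no in-paper argument to compare against, and your proposal must be judged on its own. It is correct and is essentially the classical Milman--Schechtman argument with the standard refinement that produces $\eta(\varepsilon)\simeq \varepsilon^2/\log(1/\varepsilon)$: the spherical concentration bound for the $b$-Lipschitz norm, the identity $k(X)\simeq nM^2/b^2$ obtained from $\mathbb E\|Z\|=M\,\mathbb E\|Z\|_2\simeq M\sqrt n$, the coarse $\tfrac12$-net plus one-step bootstrap to obtain $b_F\ls 4M$, and the fine $\varepsilon/8$-net with union bound all fit together exactly as you describe. (Minor bookkeeping: for the conclusion to be non-vacuous one needs $\varepsilon^2 k(X)\gtrsim 1$ so that both good events have probability close to $1$; this is implicit whenever $k\gr 1$ lies below the threshold.)

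One remark on your closing discussion. The ``spurious $\log(n/k(X))$'' issue only arises if one bounds the error crudely via $\|y-x\|\ls b\,\|y-x\|_2$ with the ambient Lipschitz constant $b$; the more traditional single-net route avoids it not by coarse pre-truncation but by the successive-approximation expansion $y=x_1+s_1x_2+s_1s_2x_3+\cdots$ with $s_j\ls\delta$ and $x_j$ net points, which automatically replaces $b$ by the sup over net points. Your two-scale decomposition is an equivalent, arguably cleaner, way to arrive at the same $\eta(\varepsilon)\simeq\varepsilon^2/\log(e/\varepsilon)$; both fall short of the sharper $\eta(\varepsilon)\simeq\varepsilon^2$ of Gordon and Schechtman, which requires different tools (Gaussian comparison/chaining), as the paper notes.
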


V. Milman's proof provides $\eta(\varepsilon) \simeq \varepsilon^2/ \log(1/\varepsilon)$ while 
Gordon \cite{Go} and Schechtman \cite{Sch-eps} proved
that one can always have $\eta(\varepsilon) \simeq \varepsilon^2$. 

For any given normed space $X=(\mathbb R^n, \|\cdot\|)$ and $\varepsilon\in (0,1)$ we define $k(X,\varepsilon)$ the maximal positive 
integer $k\ls n$ for which the random $k$-dimensional subspace $F$ of $X$ is {\it $(1+\varepsilon)$-spherical}, i.e.
\[ \max_{z\in S_F} \|z\| / \min_{z\in S_F} \|z\| <1+\varepsilon, \]
with probability at least 2/3.

With this terminology Theorem \ref{thm:Dvo} implies that $k(X,\varepsilon) \gr c\varepsilon^2 k(X)$, 
for any normed space $X=(\mathbb R^n, \|\cdot\|)$. Note
that there are spaces for which the dependence on $\varepsilon$ in the above asymptotic formula is much better, 
e.g. $k(\ell_\infty^n, \varepsilon)\simeq \frac{\varepsilon}{\log (1 / \varepsilon)} k(\ell_\infty^n)$; see \cite{Sch-cube} and \cite{Tik-cube}.
The reader may consult \cite{MS} for further background on the local theory of normed spaces.

Using the construction introduced in previous paragraph we may show that for any norm $\|\cdot \|$ on $\mathbb R^n$ 
there exists a $t$-equivalent norm $f_t(\cdot)$ such that $k(X_t, \varepsilon)\simeq \varepsilon^2 k(X_t)$ for all, not so large, $t>0$.
In geometric language this can be interpreted as follows: in the space of centrally symmetric $n$-dimensional convex bodies, 
the ones which admit random almost spherical sections, with high probability, in dimension at most
$C\varepsilon^2 k(X)$ form a $C_0$-net with respect to the geometric distance. More precisely we prove the following:

\begin{theorem} [instability]
There exists an universal constant $C>1$ with the following property: For any normed space 
$X=(\mathbb R^n, \|\cdot\|)$ with $C \ls t \ls \sqrt{k(X)}$, the normed spaces $X_t=(\mathbb R^n, f_t)$ 
defined in \eqref{eq:norm-tilted} satisfy:
\begin{itemize}

\item [a.] For all $x\in \mathbb R^n$ we have $\|x\|\ls f_t(x)\ls 2t \|x\|$. 

\item [b.] For every $\varepsilon\in (0,1/3)$ one has $k(X_t, \varepsilon) \simeq  \varepsilon^2k(X_t)$.

\end{itemize}
\end{theorem}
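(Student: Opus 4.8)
The plan is to establish part (a) first, since it is essentially immediate from Lemma~\ref{lem:proper-f-t}(a): we have $\|x\| \ls f_t(x) \ls (1+t)\|x\|$, and for $t \gr C \gr 1$ one has $1+t \ls 2t$, so the two norms are $2t$-equivalent. The geometric content lies in part (b). The lower bound $k(X_t,\varepsilon) \gr c\varepsilon^2 k(X_t)$ is free: it is exactly the Gordon--Schechtman form of Theorem~\ref{thm:Dvo}, valid for \emph{every} normed space. So the real task is the matching upper bound $k(X_t,\varepsilon) \ls C\varepsilon^2 k(X_t)$, i.e.\ showing that the random section of $X_t$ of dimension significantly exceeding $\varepsilon^2 k(X_t)$ \emph{fails} to be $(1+\varepsilon)$-spherical with probability at least $1/3$.

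The strategy for the upper bound is the standard dichotomy between fluctuation and diameter. First I would record that since $C \ls t \ls \sqrt{k(X)}$, Lemma~\ref{lem:proper-f-t}(b) gives $b_t = (1+t)b$ and $\sqrt{k_t} = (1+t)^{-1}(\sqrt{k(X)} + t\sqrt{2/\pi})$, so $\sqrt{k_t} \simeq \sqrt{k(X)}/t + 1 \simeq \sqrt{k(X)}/t$ (using $t \ls \sqrt{k(X)}$); in particular $k(X_t)$ is comparable to $k(X)/t^2$. The crucial input is Proposition~\ref{prop:instable-f-t} (equivalently Theorem~\ref{thm:main-3-1}): for $t \gr 4$ the tilted norm $f_t$ satisfies the \emph{reverse} concentration estimate $\mathbb P(|f_t(Z) - \mathbb E f_t(Z)| \gr \varepsilon) \gr c_2 e^{-C_2 \varepsilon^2/b_t^2}$. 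Combined with the standard upper concentration $\mathbb P(|f_t(Z) - \mathbb E f_t(Z)| \gr \varepsilon) \ls 2e^{-\varepsilon^2/(2b_t^2)}$, this pins the fluctuation of $f_t(Z)$ around its mean at the scale $b_t$ exactly, so that with constant probability $|f_t(Z) - \mathbb E f_t(Z)| \simeq b_t$, and these events are not too rare on a net.

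From here I would run the classical ``oscillation forces non-sphericity'' argument on the sphere: if $F$ is a random $k$-dimensional subspace, a net argument on $S_F$ shows that when $k$ is much larger than $(\mathbb E f_t(Z)/b_t)^2 = k(X_t)$ times a small constant depending on $\varepsilon$, one can find with probability $\gr 1/3$ two unit vectors $z_1, z_2 \in S_F$ with $f_t(z_1) \gr \mathbb E f_t(Z) + c\varepsilon\,\mathbb E f_t(Z)$ while $\min_{z \in S_F} f_t(z) \ls \mathbb E f_t(Z)$; the first follows because the tilted norm genuinely oscillates at scale $b_t = \mathrm{Lip}(f_t)$ (not a smaller scale), by the reverse estimate, and the lower bound on the minimum is automatic since $\mathbb E f_t(Z)$ is (up to constants) $M$-type. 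Then $\max/\min \gr 1 + c\varepsilon$, so $F$ is not $(1+\varepsilon)$-spherical, forcing $k \ls C\varepsilon^2 k(X_t)$. The main obstacle — and the step I would spend the most care on — is making the net/union-bound argument quantitatively tight: one must choose the net cardinality (roughly $(c/\varepsilon)^k$ points) so that the probability that \emph{every} net point has small oscillation is $< 1/3$, which requires the per-point reverse probability $c_2 e^{-C_2\varepsilon^2 \cdot (\mathbb E f_t(Z))^2/b_t^2}$ to dominate $(\varepsilon/c)^{-k}$; this is exactly where the hypothesis $k \gg \varepsilon^2 k(X_t)$ (equivalently $\varepsilon^2 k(X_t) \ll k$) is consumed, and where the logarithmic factors must be shown to wash out. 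The fact that our reverse concentration is genuinely Gaussian in $\varepsilon$ (no logarithmic loss in the exponent, thanks to $\mathbf{s}(f_t) \simeq 1$ for $t \gr 4$) is what makes the exponents match and yields $k(X_t,\varepsilon) \simeq \varepsilon^2 k(X_t)$ rather than a weaker bound.
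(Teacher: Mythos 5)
Your outline of the structure is right: part~(a) is immediate from Lemma~\ref{lem:proper-f-t}(a), the lower bound in part~(b) is the Gordon--Schechtman theorem, and the substance is the upper bound $k(X_t,\varepsilon)\ls C\varepsilon^2 k(X_t)$, for which the reverse concentration from Proposition~\ref{prop:instable-f-t} is the right input (and you are correct that $t\gr 4$ gives $\mathbf{ov}(f_t)\gr 1/8$, so there is no logarithmic loss in the exponent of the reverse estimate).

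However, the ``net/union-bound argument on $S_F$'' is the wrong mechanism, and this is a genuine gap, not a technicality. You want a \emph{lower} bound on the probability that some point of $S_F$ has large oscillation. A union bound over a net of $S_F$ of size $(c/\varepsilon)^k$, combined with a \emph{lower} bound on each per-point probability, does not give this: $\mathbb P(\bigcup_i E_i)$ can be as small as $\max_i \mathbb P(E_i)$ when the events are strongly positively correlated, which they are here (all net points live in the same random subspace $F$, and the norm values are Lipschitz, hence highly dependent). The requirement $Np\gr 1$ with $N\simeq(c/\varepsilon)^k$ that you write down has no probabilistic consequence without independence or some explicit anti-correlation input. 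Your acknowledged logarithmic mismatch is a symptom of this: if the argument went through it would give $k(X_t,\varepsilon)\ls C\varepsilon^2 k_t/\log(1/\varepsilon)$, which contradicts the Gordon--Schechtman lower bound $k(X_t,\varepsilon)\gr c\varepsilon^2 k_t$ for small $\varepsilon$.

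The paper instead follows Schechtman's argument from \cite{Sch-cube}: one realizes the random $F$ as the span of $k$ \emph{independent} standard Gaussians $g_1,\dots,g_k$, and the power $k$ in the bound comes from this independence, not from a net on $S_F$. Concretely, \cite[Lemma~1]{Sch-cube} bounds $\nu_{n,k}(\mathcal F_\varepsilon)$ by a sum of two $k$-th powers of single-Gaussian probabilities, after splitting $\mathcal F_\varepsilon$ into the subset $\mathcal B_\varepsilon$ where the random radius $M_F$ is close to $\mathbb E f_t(Z)/\mathbb E\|Z\|_2$ and its complement. You omitted this splitting, and it matters: without it, $M_F$ is itself random and one cannot compare $\max_{S_F} f_t$ and $\min_{S_F} f_t$ to a deterministic level. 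The two-sided estimate of Lemma~\ref{lem:gauss-lem-4} (concentration for the first term, anti-concentration from Proposition~\ref{prop:instable-f-t} for the second) is then plugged in, and comparing against the assumed lower bound $\nu_{n,k}(\mathcal F_\varepsilon)\gr 1-e^{-\beta k}$ yields $k\ls C\varepsilon^2 k_t$ with no logarithmic loss. If you want to salvage your sketch, replace the net on $S_F$ by the $k$ spanning Gaussians and invoke Schechtman's lemma; this is where the independence you were implicitly relying on actually lives.
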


\noindent {\it Proof.} The argument follows the same lines as in \cite{Sch-cube} (see also \cite[Section 5]{PVZ}), hence we 
roughly sketch the details for reader's convenience. 
Let $\varepsilon \in (0,\frac{1}{3})$ and let the set of $k$-dimensional subspaces of $X_t$,
\begin{align*}
{\mathcal F}_\varepsilon:= \left\{ F\in G_{n,k} \mid (1+\varepsilon)^{-1} M_F\ls f_t(\theta) \ls (1+\varepsilon)M_F\, \; \forall \theta\in S_F \right\},
\end{align*} where $M_F= \int_{S_F} f_t(u) \, d\sigma_F(u)$ and $\sigma_F$ denotes the uniform probability measure on the 
sphere $S_F=S^{n-1}\cap F$. Define further,
\begin{align*}
{\mathcal B}_\varepsilon:= \left\{ F\in {\mathcal F}_\varepsilon \mid (1-2\varepsilon)\frac{\mathbb E[f_t(Z)]}{\mathbb E\|Z\|_2}\ls 
M_F\ls (1+2\varepsilon)\frac{\mathbb E[f_t(Z)]}{\mathbb E\|Z\|_2}\right\}.
\end{align*} 
An application of \cite[Lemma 1]{Sch-cube} yields:
\begin{align*}
\nu_{n,k}({\mathcal F}_\varepsilon) &=  \nu_{n,k}({\mathcal F}_\varepsilon \setminus {\mathcal B}_\varepsilon)+ \nu_{n,k}({\mathcal B}_\varepsilon) \\
& \ls \left[ \mathbb P \left( \left\{ f_t(Z) \gr \frac{1+ 2 \varepsilon}{1+\varepsilon}\frac{\mathbb E[f_t(Z)]}{\mathbb E\|Z\|_2} \|Z\|_2  \; {\rm \bf or} \; 
f_t(Z) \ls (1+ \varepsilon)(1- 2\varepsilon)\frac{\mathbb E[f_t(Z)]}{\mathbb E\|Z\|_2} \|Z\|_2 \right\} \right) \right]^k + \\
& \left[ \mathbb P\left( \left\{ \frac{1-2\varepsilon}{1+\varepsilon} \|Z\|_2 \frac{\mathbb E[f_t(Z)]}{\mathbb E\|Z\|_2} \ls f_t(Z) \ls (1+\varepsilon)(1+2\varepsilon)  \frac{\mathbb E [f_t(Z)]}{\mathbb E \|Z\|_2} \|Z\|_2 \right\}\right) \right]^k.
\end{align*} In order to proceed we will need the following estimate:

\begin{lemma} \label{lem:gauss-lem-4}
For any $C_4\ls t\ls \sqrt{k(X)}$ and for every $0<\delta<1/3$ we have:
\begin{align*}
c_4 e^{-C_4\delta^2 k_t } \ls \mathbb P \left( f_t(Z) \ls \frac{(1-\delta)\mathbb E [f_t(Z)]}{\mathbb E\|Z\|_2} \|Z\|_2 \; {\rm \bf or} \;  
f_t(Z) \gr \frac{(1+\delta)\mathbb E [f_t(Z)]}{\mathbb E\|Z\|_2} \|Z\|_2 \right) \ls C_4 e^{-c_4 \delta^2 k_t }.
\end{align*}
\end{lemma}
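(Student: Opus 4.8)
\emph{Set-up.} Write $a:=\mathbb E[f_t(Z)]/\mathbb E\|Z\|_2$ and $b_t={\rm Lip}(f_t)$. By Lemma~\ref{lem:proper-f-t}(b) and \eqref{eq:exp-f-t} one has $\mathbb E[f_t(Z)]=b_t\sqrt{k_t}$ and
$k_t=(1+t)^{-2}(\sqrt{k(X)}+t\sqrt{2/\pi})^2\ls 4k(X)/(1+t)^2\ls 4n/(1+C_4)^2$, using $k(X)\ls n$ and $t\ls\sqrt{k(X)}$; hence, after enlarging the universal constant $C_4$, we may assume $k_t\ls n/M$ for any prescribed universal $M\gr1$ (to be fixed at the end). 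The structural point is that $\theta:=Z/\|Z\|_2$ is uniform on $S^{n-1}$, \emph{independent} of $\|Z\|_2$, and $f_t(Z)=\|Z\|_2\,f_t(\theta)$; consequently $\mathbb E_\sigma f_t:=\mathbb E[f_t(\theta)]=a$ and the quantity to estimate is exactly $\mathbb P(|f_t(\theta)-a|\gr\delta a)$. For the upper bound, on the event $\{|f_t(Z)-\mathbb E f_t(Z)|\ls\tfrac\delta4\mathbb E f_t(Z)\}\cap\{|\,\|Z\|_2-\mathbb E\|Z\|_2\,|\ls\tfrac\delta4\mathbb E\|Z\|_2\}$ the quotient $f_t(Z)/\|Z\|_2$ lies in $((1-\delta)a,(1+\delta)a)$; Gaussian concentration for $f_t$ (Lipschitz constant $b_t$) and for $\|\cdot\|_2$, with $\mathbb E f_t(Z)=b_t\sqrt{k_t}$ and $(\mathbb E\|Z\|_2)^2\gr n/2\gr k_t$, bounds the complement by $2e^{-c\delta^2 k_t}+2e^{-c\delta^2 n}\ls Ce^{-c\delta^2 k_t}$.

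\emph{Lower bound, main range.} Put $\varepsilon:=3\delta\,\mathbb E f_t(Z)=3\delta b_t\sqrt{k_t}$. Proposition~\ref{prop:instable-f-t} (applicable since $t\gr4$) gives $\mathbb P(|f_t(Z)-\mathbb E f_t(Z)|\gr\varepsilon)\gr c_2 e^{-9C_2\delta^2 k_t}$. On the other hand, conditioning on $\theta$ and writing $f_t(Z)-\mathbb E f_t(Z)=(\|Z\|_2-\mathbb E\|Z\|_2)\,f_t(\theta)+\mathbb E\|Z\|_2\,(f_t(\theta)-a)$, one sees that if $f_t(\theta)\in((1-\delta)a,(1+\delta)a)$ then the second summand has modulus $<\delta a\,\mathbb E\|Z\|_2=\varepsilon/3$, so $\{|f_t(Z)-\mathbb E f_t(Z)|\gr\varepsilon\}$ forces $|\,\|Z\|_2-\mathbb E\|Z\|_2\,|\gr \tfrac{2\varepsilon}{3f_t(\theta)}\gr\tfrac{\varepsilon}{3a}=\delta\,\mathbb E\|Z\|_2$ (this is exactly why $\varepsilon$ is taken proportional to $\delta\,\mathbb E f_t(Z)$), an event of probability $\ls 2e^{-c\delta^2(\mathbb E\|Z\|_2)^2}\ls 2e^{-c'\delta^2 n}$. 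Integrating in $\theta$ yields $c_2 e^{-9C_2\delta^2 k_t}\ls\mathbb P(|f_t(\theta)-a|\gr\delta a)+2e^{-c'\delta^2 n}$; once $\delta^2 n$ exceeds a suitable absolute constant, the bound $k_t\ls n/M$ with $M$ large (e.g. $M\gr 18C_2/c'$) makes the last term $\ls\tfrac12 c_2 e^{-9C_2\delta^2 k_t}$, which gives the lower bound here.

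\emph{Lower bound, residual range.} When $\delta^2 n$ is below that absolute constant, $e^{-C_4\delta^2 k_t}\simeq1$ (as $\delta^2 k_t\ls\delta^2 n$), so it suffices to bound $\mathbb P(|f_t(\theta)-a|\gr\delta a)$ below by a constant. Since $\mathbb E\|Z\|_2^2=n$ and $\theta\perp\|Z\|_2$, a short computation gives the identity ${\rm Var}_{\gamma_n}(f_t)=n\,{\rm Var}_\sigma(f_t)+{\rm Var}(\|Z\|_2)\,a^2$; combined with Lemma~\ref{lem:proper-f-t}(c) (${\rm Var}_{\gamma_n}(f_t)\gr b_t^2/64$), ${\rm Var}(\|Z\|_2)\ls1$ and $a^2=b_t^2 k_t/(\mathbb E\|Z\|_2)^2\ls 2b_t^2 k_t/n\ls b_t^2/128$ (enlarging $M$), this forces ${\rm Var}_\sigma(f_t)\gr b_t^2/(128n)\gr c\,a^2/k_t$. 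In the present range $\delta^2 k_t\ls\delta^2 n/M$ is small, so $\delta a\ls\tfrac12\sqrt{{\rm Var}_\sigma(f_t)}$; then the Paley--Zygmund inequality applied to $(f_t(\theta)-a)^2$ — with $\mathbb E_\sigma(f_t(\theta)-a)^4\ls C\,{\rm Var}_\sigma(f_t)^2$ furnished by L\'evy's sub-gaussian concentration of the $b_t$-Lipschitz map $f_t$ on $S^{n-1}$ — yields $\mathbb P(|f_t(\theta)-a|\gr\delta a)\gr c$. Fixing $M$ (hence $C_4$) so that all of the above inequalities hold at once, and handling $n$ below an absolute constant directly (there $f_t(\theta)$ is a non-degenerate random variable on a fixed compact family), completes the proof.

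\emph{Where the difficulty lies.} The whole argument is a bookkeeping of constants: the reverse gaussian estimate of Proposition~\ref{prop:instable-f-t} carries a large universal constant in its exponent, so in the main step a crude union bound against the $O(1)$-Lipschitz fluctuations of $\|Z\|_2$ would not close. What rescues it is the inequality $k_t\ls 4k(X)/(1+t)^2$: taking the hypothesis threshold $C_4$ large pushes $k_t$ a fixed large factor below $n$, which simultaneously makes the $\|Z\|_2$-fluctuation terms negligible against $e^{-C\delta^2 k_t}$ and makes the ``main'' and ``residual'' ranges overlap. Getting this split to be genuinely exhaustive — choosing $M$ large enough relative to \emph{all} the implicit constants (from Proposition~\ref{prop:instable-f-t}, from the variance identity, and from the Paley--Zygmund step) — is the only really delicate point.
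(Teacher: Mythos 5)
Your proof is correct and follows essentially the same route as the paper: both exploit the independence of $\theta=Z/\|Z\|_2$ and $\|Z\|_2$, use Gaussian concentration of $f_t$ and $\|\cdot\|_2$ for the upper bound, and for the lower bound combine Proposition~\ref{prop:instable-f-t} with the crucial observation that $t\gr C_4$ forces $k_t\ls n/M$ so that the $\|Z\|_2$-fluctuation term is subdominant. Your explicit residual-range treatment via a variance identity and Paley--Zygmund addresses the small-$\delta$ regime which the paper's proof leaves implicit (and which can alternatively be closed by the one-line observation that $\delta\mapsto\mathbb P(A_\delta)$ is non-increasing while $\delta_0^2 k_t$ is bounded by a universal constant).
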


Taking Lemma \ref{lem:gauss-lem-4} for granted it suffices to consider $1/ \sqrt{k_t}<\varepsilon<1/3$, hence we obtain:
\begin{align*}
\nu_{n,k}({\mathcal F}_\varepsilon) &\ls C_5^k e^{-k \varepsilon^2 k_t/ C_5}+ (1- C_5^{-1} e^{-C_5 \varepsilon^2 k_t} )^k \\
&\ls e^{-\frac{1}{2C_5} k \varepsilon^2 k_t} +1-C_5^{-1} e^{-C_5 \varepsilon^2 k_t },
\end{align*} provided that $\varepsilon^2 k_t > 2 C_5\log C_5$. Now assuming that 
$\nu_{n,k}({\mathcal F}_\varepsilon) \gr 1- e^{-\beta k} \gr 2/3$ 
for some universal constant $\beta>0$ and restricting further $\max\{ 2 \beta C_5, 2C_5\log C_5\} <\varepsilon^2 k_t$, we obtain:
\begin{align*} 
1-e^{-2C_5\varepsilon^2 k_t} \gr 1-C_5^{-1}e^{-C_5 \varepsilon^2 k_t} \gr 1-e^{-\beta k} -e^{-\frac{1}{2C_5} k \varepsilon^2 k_t }  \gr 1- 2e^{- \beta k} 
\gr 1-e^{-c_0 \beta k},
\end{align*} which implies $k\ls \frac{2C_5}{c_0\beta} \varepsilon^2 k_t$, as required. \prend

\medskip

\noindent {\it Proof of Lemma \ref{lem:gauss-lem-4}.} Let $\xi= f_t(Z)/\mathbb E[f_t(Z)]$ and $\eta= \|Z\|_2/\mathbb E\|Z\|_2$.
For any $s\in (0,1)$ we define the sets:
\begin{align*}
A_s=\{|\xi-\eta| > s\eta\}, \quad B_s=\{ |\eta-1|>s \}, \quad \Gamma_s=\{ |\xi-1|>s\}.
\end{align*} Our aim is to show that: 
\begin{align*}
c_4 e^{-C_4\delta^2 k_t } \ls \mathbb P ( A_\delta ) \ls C_4 e^{-c_4 \delta^2 k_t }.
\end{align*} Note that for any $0<s<1/2$ we have
\begin{align*}
\mathbb P(A_s) \ls \mathbb P(\Gamma_{s/4} ) + \mathbb P(B_{s/2}).
\end{align*} Using the Gaussian concentration for $f_t(\cdot)$ and $\|\cdot\|_2$ we infer:
\begin{align*}
\mathbb P(A_\delta) \ls C_0e^{-c_0\delta^2 k_t} +C_0 e^{-c_0\delta^2 n} \ls 2C_0 e^{-c_0 \delta^2 k_t}, \quad 0<\delta < 1/3,
\end{align*} where we have also used the general fact that $k(X)\ls n$. This proves the rightmost inequality.

For the lower estimate note that for any $0<s<1$ one has
\begin{align*}
\mathbb P(A_{s/3}) \gr \mathbb P(\Gamma_s)- \mathbb P(B_{s/3}).
\end{align*} Using the Gaussian concentration for $\|\cdot\|_2$ and Proposition \ref{prop:instable-f-t} we obtain:
\begin{align*}
\mathbb P(A_\delta) \gr c_2e^{-9C_2 \delta^2 k_t} - C_0e^{-c_0\delta^2n} ,
\end{align*} provided that $t\gr 4$, where $C_2>0$ is the constant from Proposition \ref{prop:instable-f-t}. The latter
is larger than $\frac{c_2}{2} e^{-9C_2 \delta^2 k_t}$ provided that $(c_0n-9C_2k_t)\delta^2 \gr \log (2C_0/c_0)$. Note 
that since $4 \ls t \ls \sqrt{k(X)}$, Lemma \ref{lem:proper-f-t}.b yields $k_t\ls C_3 t^{-2}k(X) \ls C_3n/t^2$,
thus it suffices to have $(c_0n-9C_2C_3 t^{-2}n)\delta^2 \gr \log(2C_0/c_0)$. The last one holds 
if $t\gr \max\{4,\sqrt{18C_2C_3/c_0} \}$ and $\delta \gr \sqrt{ \frac{2}{c_0n} \log (\frac{2C_0}{c_0})} $.
The assertion of the lemma follows.  \prend

\bigskip

\noindent {\bf Acknowledgements.} The author would like to thank Grigoris Paouris for posing him the question about the 
tightness of the concentration and for many fruitful discussions. 
He would also like to thank Peter Pivovarov for useful advice and comments, Ramon van Handel and Emanuel Milman 
for valuable remarks.


\bigskip
\bibliography{gauss-tight-ref}
\bibliographystyle{alpha}

\end{document}